\documentclass[dvipdf]{amsart}

\title[]{Extremal length boundary of Teichm\"uller space contains non-Busemann points}
\author{Hideki Miyachi}
\address{Department of Mathematics,
Graduate School of Science,
Osaka University,　
Machikaneyama 1-1,
Toyonaka,
Osaka 560-0043, Japan}
\thanks{The author is partially supported by the Ministry of Education, Science, Sports and Culture, Grant-in-Aid for Scientific Research (C), 21540177}
\subjclass[2010]{Primary~30F60, 32G15, 30C75, 31B15, Secondary~ 30C62, 51F99}
\keywords{Teichm\"uller space, Teichm\"uller distance,
extremal length,
metric boundary,
horofunction boundary, Busemann point}

\makeatletter
\@addtoreset{equation}{section}

\makeatother

\usepackage{amsfonts, amsmath, amssymb, amsthm}
\usepackage{url}
\usepackage[dvipdf]{graphicx}
\usepackage[dvipdf]{color}

\newtheorem{theorem}{Theorem}[section]
\newtheorem{lemma}{Lemma}[section]
\newtheorem{proposition}{Proposition}[section]
\newtheorem{corollary}{Corollary}[section]

\newtheorem{claim}{Claim}

\newcommand{\ext}{{\rm Ext}}

\begin{document}
\maketitle

\begin{abstract}
We present an overview of the extremal length embedding of Teichm\"uller space and its extremal length compactification. For Teichm\"uller spaces of dimension at least two, we describe a large class of non-Busemann points on the metric boundary, that is, points that cannot be realized as limits of almost geodesic rays.
\end{abstract}

\section{Introduction}

\subsection{Background}
Let $M=(M,\rho)$ be a locally compact complete metric space.
In \cite{Rieffel}
M. Rieffel defined the metric compactification of $M$ and the metric boundary
to be the new points added to $M$ in the compactification.
This metric compactification with base point $x_0\in M$
is the maximal ideal space of the $C^*$-algebra
generated by
constant functions,
continuous functions vanishing at infinity,
and continuous functions which form
$$
M\ni x\mapsto \rho (x,x_0)-\rho (x,y)
$$
indexed by $y\in M$.
He observed that the metric compactification can be naturally identified with a compactification given by M. Gromov in \cite{Gromov}.
In \S4 of \cite{Rieffel}
Rieffel and also in \S8.2
of Chapter II of \cite{BriF}
Bridson and Haefliger call this compactification the
\emph{horofunction compactification}.
%
%

In \cite{Rieffel},
M. Rieffel also defined geodesic-like sequences (or rays)
in a pointed metric space
which he called
\emph{almost geodesic rays}
(cf. \S\ref{subsec:almost_geodesics}).
He observed that any almost geodesic ray converges to a point in the metric boundary.
A point in the metric boundary
is called a \emph{Busemann point}
if it is the limit point of an almost geodesic ray.
Rieffel also asks whether every point in the metric boundary of a given metric space is a Busemann point (see the paragraph after Definition 4.8 in \cite{Rieffel}).
Related to this problem,
C. Webster and A. Winchester \cite{WebsterWinchester1}
gave geometric
conditions which determine whether or not every point
on the metric boundary of a graph with the standard path
metric is a Busemann point,
and an example of a graph which
admits non-Busemann points in its metric boundary.
However,
there are few examples
of metric spaces where the metric boundary 
or Busemann points are explicitly known
(e.g. \cite{KMN} and \cite{Walsh2}).

\subsubsection*{The metric boundary of Teichm\"uller space}
From the Kerckhoff's formula \eqref{eq:Kerckhoff_formula},
we often call the geometry of the Teichm\"uller distance
the \emph{extremal length geometry on Teichm\"uller space}.
In \cite{GM},
F. Gardiner and H. Masur
defined a natural boundary in the extremal length geometry.
The boundary in the extremal length geometry
is recently called the \emph{Gardiner-Masur boundary}
 (cf. \S\ref{subsec:GMbdy}).
In \cite{LiuSu},
L. Liu and W. Su recently observed that
the metric boundary
with respect to the Teichm\"uller distance
is canonically identified with the Gardiner-Masur boundary.
The author showed
that the Gromov product with respect to Teichm\"uller distance
extends continuously to the boundary,
and the extension is canonically
related to the intersection number function
on the space of measured foliations
(for details, see \cite{Mi4}).
Thus, the metric compactification of Teichm\"uller space
connects between topological and analytic aspects of Teichm\"uller space.

The metric compactification
is thought of  the Martin compactification
in some potential theory.
Indeed,
M. Akian, S. Gaubert and C. Walsh \cite{AGW}
correlated the metric boundary
with the Martin boundary in max-plus idempotent analysis.
In max-plus potential theory,
harmonic functions
$u$ on the metric space $(M,\rho)$ take the form
$$
u(x)=\sup_{y\in M}\{A(x,y)+u(y)\},
$$
where $A(x,y)$ is the max-plus potential kernel
which is a function on $M\times M$ with appropriate properties
(cf. \S2 of \cite{AGW}).
In this setting,
the metric boundary coincides with
the max-plus Martin boundary
with the generating kernel $A(x,y)=-\rho(x,y)$,
and the set of Busemann points
is exactly equal to the intersection of
the metric boundary and the minimal Martin space
(cf. Corollary 7.13 in \cite{AGW},
see also the discussion after Example 7.11 in \cite{AGW}).

Any minimal point in the Martin compactification
is harmonic in max-plus potential theory
(cf. Proposition 4.4 in \cite{AGW}).
In our case,
for instance,
the log-extremal length function
of a uniquely ergodic measured foliation $G$
\begin{equation} \label{eq:horofunction_G}
T(X)\ni y \mapsto 
\frac{1}{2}\left(\log \ext_y(G)-\log \ext_{x_0}(G)\right)
\end{equation}
is harmonic in the max-plus potential theory
since the projective class $[G]$ is a Busemann point
(and hence, is a minimal Martin point)
and \eqref{eq:horofunction_G} is the corresponding horofunction,
where $x_0=(X,id)$ is the basepoint of $T(X)$
(cf. \cite{LiuSu} and \cite{Mi3}. See also \S\ref{subsec:teichmuller-space}).
Thus, it is expected that there is a new kind of potential theory closely related to
the extremal length geometry of the Teichm\"uller space
which may differ from Kaimanovich and Masur's potential theory,
and
Busemann points and non-Busemann points
will have a crucial rule in the potential theory.
(cf. \cite{Masur4} and \cite{KM2}. See also \cite{KM1}).

\subsection{Results}
Let $X$ be a Riemann surface of finite analytic type $(g,m)$ with $2g-2+m>0$.
The \emph{Teichm\"uller space} $T(X)$ of $X$ is a
quasiconformal deformation space
of marked Riemann surfaces with same type as $X$.
Teichm\"uller space $T(X)$ admits a canonical distance,
called the \emph{Teichm\"uller distance} $d_T$
(cf. \S\ref{subsec:teichmuller-space}).

The aim of this paper is to show the following.

\begin{theorem}[Non-Busemann points]
\label{thm:main}
When $3g-3+m\ge 2$,
the metric boundary of the Teichm\"uller space
with respect to the Teichm\"uller distance
contains non-Busemann points.
\end{theorem}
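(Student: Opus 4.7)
The plan is to exploit Walsh's identification of the metric boundary of $(T(X),d_T)$ with the Gardiner--Masur boundary, together with the explicit extremal-length formula for horofunctions (due to Liu--Su, Walsh, and Miyachi), to exhibit a boundary point that is not the limit of any almost-geodesic. In this formalism each horofunction at infinity is represented projectively by a non-negative function $\mathbb{F}\colon\ZZ\to[0,\infty)$ extending the square root of extremal length, with horofunction value
\[
h_{[\mathbb{F}]}(y)=\tfrac{1}{2}\log\sup_{F\in\ZZ}\frac{\mathbb{F}(F)^2}{\ext_y(F)}+\text{const}.
\]
The Gardiner--Masur limit of a Teichm\"uller geodesic ray $R_q$ from the basepoint is an explicitly computable Busemann point $[\mathbb{F}_q]$ determined by $q$.

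Using the hypothesis $3g-3+n\geq 2$, I would pick two disjoint essential subsurfaces $Y_1,Y_2\subset X$, each supporting a non-trivial measured foliation $\mu_i\in\ZZ$; such a pair exists precisely because the complex dimension is at least two. I would then construct a one-parameter family of projective boundary points $[\mathbb{F}_t]\in\cl{T(X)}\setminus T(X)$, for $t>0$, whose values couple $\mu_1$ and $\mu_2$ with an independent relative weight $t$. The class $[\mathbb{F}_t]$ is designed so that for generic $t$ it cannot agree with the Gardiner--Masur limit of any single Teichm\"uller ray based at the basepoint: the ``mixed'' scaling of the two foliations has one more parameter than the foliation data encoded by a single quadratic differential.

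To show that $[\mathbb{F}_t]$ is not a Busemann point for suitable $t$, I would argue by contradiction. Assume an almost-geodesic $(x_n)\subset T(X)$ satisfies $h_{x_n}\to h_{[\mathbb{F}_t]}$ in the metric boundary. The additivity condition $d_T(x_0,x_n)-d_T(x_m,x_n)\to d_T(x_0,x_m)$ built into the definition of an almost-geodesic, together with the rigidity of Teichm\"uller geodesics, should force $(x_n)$ to be asymptotic to some Teichm\"uller ray $R_{q'}$, so that its Gardiner--Masur limit is rigidly determined by $q'$. In particular, the asymptotic ratio $\ext_{x_n}(\mu_1)/\ext_{x_n}(\mu_2)$ would be pinned down by $q'$, while by varying $t$ one produces any prescribed value of this ratio; choosing $t$ outside the rigid range then yields the contradiction.

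The principal obstacle is making the rigidity step above quantitative: translating the almost-geodesic condition into a precise statement about the limiting shape of $\sqrt{\ext_{x_n}(\cdot)}$. I would expect to combine the almost-geodesic condition with Masur's description of extremal-length evolution along Teichm\"uller rays, and in the thin part with Minsky's product-region estimates, in order to pin down this shape. Once the rigidity is in hand, the one-parameter family $\{[\mathbb{F}_t]\}$ contains points outside the range of Gardiner--Masur limits of Teichm\"uller rays, furnishing the non-Busemann points required by the theorem.
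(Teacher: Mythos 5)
Your overall frame---identify the metric boundary with the Gardiner--Masur boundary via Liu--Su and exhibit a boundary point to which no almost geodesic can converge---agrees with the paper. But the engine of your argument is the ``rigidity step'': the claim that the almost-geodesic condition forces the sequence to be asymptotic to a single Teichm\"uller ray $R_{q'}$, so that its Gardiner--Masur limit is rigidly determined by a quadratic differential. You give no argument for this, and it is precisely the difficulty the paper's proof is built to avoid. What the almost-geodesic condition actually yields (Corollary~\ref{lcoro:limit_is_G} in the paper) is only that the directions $G_t$ of the geodesic segments from $x_0$ to $\gamma(t)$ converge to a single foliation $G$; such radial convergence of directions does not imply that $\gamma$ stays within bounded distance of $R_{G,x_0}$, nor that its horofunction limit coincides with that of any ray. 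Proving a rigidity statement of the kind you need is a substantial theorem in its own right and is not available off the shelf here; without it your contradiction never gets started. There is a second gap: you never verify that your one-parameter family $[\mathbb{F}_t]$ actually lies in the Gardiner--Masur boundary, i.e.\ that each $\mathbb{F}_t$ arises as a limit of the functions $\mathcal{E}_{y_n}$ for some sequence $y_n\in T(X)$. ``Coupling two foliations with an independent weight'' produces functions on $\mathcal{MF}$, but membership in the closure of the image of $T(X)$ is exactly the kind of statement that requires proof, and the parameter count (``one more parameter than a quadratic differential'') is heuristic rather than an argument.

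The paper's route sidesteps both issues. It takes as its candidate a point already known to lie in the boundary, namely the projective class $[G]$ of a \emph{maximal rational} foliation $G=\sum_{i}w_i\alpha_i$ (which lies in $\mathcal{PMF}\subset\partial_{GM}T(X)$ by Gardiner--Masur), and argues by contradiction directly on an arbitrary almost geodesic $\gamma$ converging to $[G]$: after showing that the characteristic annuli of $J_{G,\gamma(t)}$ have moduli comparable to $K_{\gamma(t)}$ and that simple closed curves are asymptotically untwisted in them (Lemmas~\ref{lem:char_annulus_modulus} and~\ref{lem:twisting_number}), a Kerckhoff-type two-sided extremal length estimate forces $i(\beta,G)=\bigl(\sum_{i}M_i\,i(\beta,\alpha_i)^2\bigr)^{1/2}$ for all $\beta$, and a discriminant argument shows no such identity can hold when $3g-3+n\ge 2$. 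To salvage your approach you would have to either prove the rigidity statement or, as the paper does, replace it with direct asymptotic control of $\ext_{\gamma(t)}(\cdot)$ along the almost geodesic itself.
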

Namely,
the metric boundary of Teichm\"uller space
contains non-minimal points
in terms of the max-plus potential theory.

On the other hand,
when $3g-3+m=1$,
the Teichm\"uller space endowed with the Teichm\"uller distance
is isometric to the Poincar\'e hyperbolic disk.
Hence,
every point in the metric boundary is a Busemann point.
Furthermore,
in this case,
the metric boundary of Teichm\"uller space equipped with the Teichm\"uller distance
coincides with the Thurston boundary (cf. e.g. \cite{Mi0}).

It is known that any point in the metric boundary of a complete ${\rm CAT}(0)$-space
is a Busemann point (cf. Corollary II.8.20 of \cite{BriF}).
Therefore,
we conclude the following corollary,
which is first observed by Masur \cite{Masur0}.

\begin{corollary}[Masur]
\label{coro:not_CAT0}
When $3g-3+m\ge 2$,
Teichm\"uller space equipped with the Teichm\"uller distance
is not a ${\rm CAT}(0)$-space.
\end{corollary}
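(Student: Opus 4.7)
The plan is to deduce this directly from Theorem~\ref{thm:main} by contradiction, using the cited fact that in a complete $\mathrm{CAT}(0)$-space every point of the metric boundary is Busemann. Concretely, assume for contradiction that $(T(X), d_T)$ is a $\mathrm{CAT}(0)$-space. Since the Teichm\"uller distance is complete (a classical theorem of Teichm\"uller), $(T(X), d_T)$ would then be a complete $\mathrm{CAT}(0)$-space. Applying Corollary II.8.20 of \cite{BriF} to this space, every point of its metric boundary would be a Busemann point. But this directly contradicts Theorem~\ref{thm:main}, which asserts that a non-Busemann point exists whenever $3g-3+n \geq 2$.

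The only auxiliary ingredient beyond Theorem~\ref{thm:main} and the cited $\mathrm{CAT}(0)$ result is completeness of $(T(X), d_T)$, which is standard and needs no further comment. There is no real obstacle here: all of the work is absorbed into Theorem~\ref{thm:main}, and the corollary is a one-line consequence once the general fact about complete $\mathrm{CAT}(0)$-spaces is available. One could alternatively phrase the proof contrapositively, by saying that any complete $\mathrm{CAT}(0)$-space has the property that every metric boundary point is Busemann, so the existence of a non-Busemann boundary point rules out the $\mathrm{CAT}(0)$ property for the metric under consideration.
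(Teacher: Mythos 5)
Your argument is exactly the paper's: the corollary is deduced from Theorem~\ref{thm:main} together with the cited fact that the metric boundary of a complete ${\rm CAT}(0)$-space consists of Busemann points, with completeness of $d_T$ being the standard background fact. This matches the paper's one-line derivation, so nothing further is needed.
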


\subsection{The Gardiner-Masur boundary}
\label{subsec:GMbdy}
Let $\mathcal{S}$ be the set of homotopy classes of
non-trivial and non-peripheral simple closed curves
on $X$.
We denote by $\ext_y(\alpha)$ the \emph{extremal length}
of $\alpha$ for $y\in T(X)$ (cf. \S\ref{subsubsec:teichmuller-distance}).
In a beautiful paper \cite{GM},
Gardiner and Masur proved that
the mapping
$$
T(X)\ni y\mapsto
[\mathcal{S}\ni \alpha\mapsto \ext_y(\alpha)^{1/2}]
\in {\rm P}\mathbb{R}_+^\mathcal{S}
$$
is an embedding and the image is  relatively compact,
where $\mathbb{R}_+=\{x\ge 0\}$
and ${\rm P}\mathbb{R}_+^\mathcal{S}=
(\mathbb{R}_+^\mathcal{S}-\{0\})/\mathbb{R}_{>0}$.
The closure
of the image is called the \emph{Gardiner-Masur compactification}.
The \emph{Gardiner-Masur boundary}
$\partial_{GM}T(X)$
is the complement of the image from the Gardiner-Masur compactification.
Gardiner and Masur also showed that the Gardiner-Masur boundary
contains the space $\mathcal{PMF}$ of projective measured foliations
(cf. Theorem 7.1 in \cite{GM}).

From Liu and Su's result,
Theorem \ref{thm:main}
is a consequence of the following theorem.

\begin{theorem}[Invisibility via almost geodesic rays]
\label{thm:main2}
When
$3g-3+m\ge 2$,
the projective class of a maximal rational measured foliation
cannot be the limit
of any almost geodesic ray
in the Gardiner-Masur compactification.
\end{theorem}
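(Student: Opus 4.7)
The plan is to argue by contradiction. Suppose $[F]\in\partial_{GM}T(X)$ is the limit of an almost geodesic $(x_n)$, where $F=\sum_{i=1}^{k}h_i\alpha_i$ is maximal rational with $k=3g-3+n\ge 2$. By the Liu-Su identification of the horofunction boundary with $\partial_{GM}T(X)$ together with Miyachi's explicit formulas, the horofunction at $[F]$ is
\begin{equation*}
h_{[F]}(y)=\tfrac{1}{2}\log\bigl(\ext_y(F)/\ext_{x_0}(F)\bigr),\qquad\ext_y(F)=\sup_{\beta\in\mathcal{S}}\frac{i(F,\beta)^2}{\ext_y(\beta)}.
\end{equation*}
The almost-geodesic property forces $h_{[F]}(x_n)+d_T(x_n,x_0)\to 0$, equivalently
\begin{equation*}
K_n\,\ext_{x_n}(F)\to\ext_{x_0}(F),\qquad K_n:=e^{2d_T(x_n,x_0)}.
\end{equation*}

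Next I would exploit the elementary inequality $\ext_{x_0}(F)\le K_n\,\ext_{x_n}(F)$, which follows from Kerckhoff's characterization $K_n=\sup_\beta\ext_{x_n}(\beta)/\ext_{x_0}(\beta)$ and the sup-definition of $\ext_y(F)$. Its asymptotic saturation is equivalent to the existence of a single sequence $\beta_n\in\mathcal{S}$ realizing both suprema in the limit:
\begin{equation*}
\frac{\ext_{x_n}(\beta_n)}{\ext_{x_0}(\beta_n)}\sim K_n,\qquad\frac{i(F,\beta_n)^2}{\ext_{x_0}(\beta_n)}\to\ext_{x_0}(F).
\end{equation*}
The first condition forces $[\beta_n]$ to converge in $\mathcal{PMF}$ to the vertical foliation of the initial quadratic differential $q_n$ of the Teichm\"uller segment $x_0\to x_n$; the second forces $[\beta_n]$ to converge to the vertical foliation $\mathcal{F}_v(q_F)$ of the Jenkins-Strebel differential $q_F$ on $x_0$ with horizontal foliation $F$. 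Hence $q_n\to q_F$ projectively, so that $(x_n)$ is asymptotic to the Teichm\"uller ray from $x_0$ in direction $q_F$.

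The contradiction is then obtained from Miyachi's analysis of Teichm\"uller rays in $\partial_{GM}T(X)$: because $F$ is maximal rational with $k\ge 2$ cylinders, the horizontal foliation of $q_F$ is not uniquely ergodic as a topological foliation, and the Gardiner-Masur limit of the ray is controlled by the whole family of ergodic transverse measures supported on $F$, giving a boundary point strictly distinct from $[F]$. A concrete way to see this is to use $\ext_{R_t(q_F)}(\alpha_i)=e^{-2t}\ext_{x_0}(\alpha_i)$ together with the lower bounds $\ext_{R_t(q_F)}(\beta)\ge i(\beta,\alpha_i)^2/\ext_{R_t(q_F)}(\alpha_i)$: the resulting normalized limit $e^{-t}\ext_{R_t(q_F)}(\beta)^{1/2}$ is governed by the ratios $i(\beta,\alpha_i)/\ext_{x_0}(\alpha_i)^{1/2}$ rather than by $i(F,\beta)=\sum_i h_i\,i(\beta,\alpha_i)$, whose projective classes disagree when $k\ge 2$. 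Since the almost geodesic tracks the Teichm\"uller ray of $q_F$, its Gardiner-Masur limit must coincide with that of the ray, contradicting $x_n\to[F]$.

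The principal technical difficulty is the rigorous identification of the two asymptotic alignment conditions on $\beta_n$ with projective convergence $q_n\to q_F$: the maximizing curves need not be unique and the differentials $q_n$ live in a non-compact cone, so a compactness argument in $\mathcal{PMF}$ combined with continuity properties of the Gardiner-Masur embedding is required to complete the identification and to propagate convergence from $q_n$ to the Gardiner-Masur limit of $(x_n)$.
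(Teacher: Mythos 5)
Your opening moves are sound and in fact reproduce necessary conditions that the paper also derives: the relation $K_n\,\ext_{x_n}(F)\to\ext_{x_0}(F)$ is exactly the paper's \eqref{eq:extremal_length_G_s}, and the conclusion that the initial directions of the Teichm\"uller segments $x_0\to x_n$ converge projectively to the Hubbard--Masur differential of $F$ is Corollary \ref{lcoro:limit_is_G}. The shape of your final contradiction (the Gardiner--Masur limit being governed by the quadratic expression $\sum_i M_i\,i(\beta,\alpha_i)^2$ rather than by the linear one $i(\beta,G)^2$, which is impossible for $k\ge 2$) is also the paper's endgame in \S6.

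The genuine gap is the sentence ``Hence $q_n\to q_F$ projectively, so that $(x_n)$ is asymptotic to the Teichm\"uller ray from $x_0$ in direction $q_F$,'' and its companion ``Since the almost geodesic tracks the Teichm\"uller ray of $q_F$, its Gardiner--Masur limit must coincide with that of the ray.'' Projective convergence of the initial directions $q_n\to q_F$ does not imply that the points $x_n=R_{G_n,x_0}(t_n)$ stay within bounded (let alone vanishing) distance of the ray $R_{G,x_0}$: the Teichm\"uller metric is not non-positively curved (Corollary \ref{coro:not_CAT0} is a consequence of the very theorem you are proving), rays with converging directions can diverge, and the two limits $n\to\infty$ and $t\to\infty$ do not interchange in the Gardiner--Masur compactification. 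The paper explicitly flags this obstruction: it is already known from \cite{Mi1} that no single Teichm\"uller ray converges to $[G]$ when the support of $G$ has at least two curves, and the author states he does not know whether that fact alone implies Theorem \ref{thm:main2}. Your reduction to the ray case is precisely the implication that is missing. The paper circumvents it by working along the almost geodesic itself: Lemma \ref{lem:char_annulus_modulus} shows the characteristic annuli of $J_{G,\gamma(t)}$ have moduli comparable to $K_{\gamma(t)}$, and the key Lemma \ref{lem:twisting_number} shows the twisting of any simple closed curve in these annuli is $o(K_{\gamma(t)})$; this ``no-twisting'' property is what licenses Kerckhoff's extremal-length computation (two-sided estimates \eqref{eq:extremal_length_lambda_1} and \eqref{eq:upper_bound_fini}) without ever comparing $\gamma$ to an actual geodesic ray. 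Your proposal contains no substitute for these two lemmas, so the argument does not close.
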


Related to our theorems,
In \cite{Mi1} the author already observed that no Teichm\"uller ray
converges to the projective class of $[G]$
if $[G]$ is a rational foliation whose support consists of two or more curves.

In contrast to Theorem \ref{thm:main2},
Theorem 7.1 in \cite{GM} and Theorem 3 in \cite{Mi3}
assert
that
when a measured foliation $G$ is either a weighted simple closed curve
or a uniquely ergodic measured foliation,
the projective class $[G]\in \mathcal{PMF}
\subset \partial_{GM}T(X)$ is the limit of the Teichm\"uller ray
associated to $[G]$.
Therefore,
the projective classes $[G]\in \mathcal{PMF}
\subset \partial_{GM}T(X)$
of such measured foliations
are Busemann points with respect to the Teichm\"uller distance.

\subsection{Thurston's asymmetric metric vs Teichm\"uller distance}
Recently,
C. Walsh defined the horofunction boundaries for asymmetric
metric spaces,
and observed that the horofunction boundary of Teichm\"uller space
with respect to the Thurston's asymmetric Lipschitz metric
can be canonically
identified with the Thurston boundary.
In \cite{Walsh},
he also showed that every point in the Thurston boundary
is a Busemann point with respect to the Thurston's asymmetric Lipschitz metric.

Thurston's
asymmetric Lipschitz metric coincides with
the length spectrum asymmetric metric with respect to
the hyperbolic lengths of simple closed curves,
while the Teichm\"uller distance is nothing but the length spectrum metric
with respect to the extremal lengths of simple closed curves
from
Kerckhoff's formula (cf. \eqref{eq:Kerckhoff_formula}.
See also \cite{LPST}).
Since hyperbolic and extremal
lengths are fundamental conformal invariants
in the theory of Riemann surfaces,
the distances on Teichm\"uller space associated to these quantities
should be
essential in Teichm\"uller theory.
Thus, it
is natural to compare the properties of these distances and
the associated metric compactifications they induce.
Theorem \ref{thm:main} and Walsh's results above
imply that
the asymptotic geometry with respect to the Teichm\"uller distance
is more complicated than that with respect to
Thurston's asymmetric Lipschitz metric\footnote{The author should notice that
he does NOT mean here that the geometry of Thurston's asymmetric Lipschitz metric
is simpler than that of the Teichm\"uller distance.}

\subsection{Convex realization of Teichm\"uller distance}
%
%
A. Papadopoulos posed a problem
\begin{quote}
Realize Teichm\"uller space
as a bounded convex set somewhere and
study the Hilbert metric on it
\end{quote}
(cf. Problem 13 in \cite{Papadopoulos}).
Following the problem,
it is natural to ask whether
$(T(X),d_T)$ can be realized as a bounded convex domain with the Hilbert metric.
In \cite{Walsh2},
C. Walsh gave a criterion that every horofunction
of the Hilbert geometry on given convex domains
is a Busemann point.
From Walsh's criterion,
if $(T(X),d_T)$ were realized as the Hilbert geometry
of a convex domain,
the convex domain seems to be complicated.
For instance,
Theorem \ref{thm:main} asserts that
the Teichm\"uller space $(T(X),d_T)$
with the Teichm\"uller distance
cannot be realized as
the Hilbert geometry on any polytope,
since all horofunctions of the Hilbert geometry on
a polytope are Busemann points
(see also \cite{KMN}).

\subsection{Plan of this paper}
This paper is organized as follows.
In \S2,
we recall the definitions and properties of some ingredients
in Teichm\"uller theory,
including the extremal length and the Teichm\"uller distance.
In \S3,
we discuss the metric boundaries of metric spaces,
and show that any almost geodesic ray in Teichm\"uller space
converges
in the Gardiner-Masur compactification.
In \cite{LiuSu},
Liu and Su also proved this convergence using properties of the metric boundary.
For the reader's convenience we give a simple proof applying Teichm\"uller theory.

We treat measured foliations whose projective classes are
the limits of almost geodesic rays
in \S4 and \S5.
Indeed,
in \S5,
we will observe that when a measured foliation whose projective class is
the limit of an almost geodesic ray
has a foliated annulus as its component,
any simple closed curve
is not so \emph{twisted} in the characteristic annulus corresponding to
the foliated annulus through
the almost geodesic ray (cf. Lemma \ref{lem:twisting_number}).
This is a key for getting our result.
In \S6,
we give the proof of Theorem \ref{thm:main2} by contradiction.
Indeed,
under the assumption that the projective class of a maximal measured foliation
$G$ is the limit of an almost geodesic ray,
we calculate the limit
of a given almost geodesic ray.
On the other hand,
we can check that the limit function cannot be
the intersection number function
associated to $G$.

For getting the limit function,
we will make use of Kerckhoff's calculation
of the extremal length along the Teichm\"uller ray
given in \cite{Ker}.
One of the reasons Kerckhoff's
calculations work is
that on any almost geodesic ray,
simple closed curves
satisfy a \emph{non-twisting property} along the core curves of characteristic annuli.
This property is discussed in section 5 (see also \S\ref{subsec:idea}).
%

\subsection*{Remark}
After submitting this paper,
C. Walsh informed the author that
he obtained a characterization of Busemann points in the horofunction boundary
of the Teichm\"uller space with respect to the Teichm\"uller distance,
and also found non-Busemann points in the boundary
(see \cite{Walsh3}).
However,
his method is different from ours.

\subsection*{Acknowledgements}
The author would like to express his heartfelt gratitude to Professor Athanase Papadopoulos
for his careful reading and many beneficial comments.
He also thanks Professor Weixu Su for his careful reading and useful comments,
and Professor C. Walsh  for valuable discussions and informing his work \cite{Walsh3}.
He thanks the referee for his/her careful reading and many valuable and fruitful
comments.

\subsection*{Notation}
For two functions $f(t)$ and $g(t)$ with variable $t$,
$f(t)\asymp g(t)$ means that $f(t)$ and $g(t)$ are comparable
in the sense that there are positive numbers $B_1$ and $B_2$
independent of the parameter $t$ such that $B_1 g(t)\le f(t)\le B_2g(t)$.

\section{Extremal length and Teichm\"uller theory}

\subsection{Extremal length}
For an introduction to the theory of extremal length,
See Ahlfors' books \cite{Ahlfors} and \cite{Ahlfors2}.

\subsubsection{Extremal length of a family of rectifiable curves}
Let $\Gamma$ be a family of rectifiable curves on a Riemann surface $R$.
The \emph{extremal length} of $\Gamma$ (on $R$)
is defined by
\begin{equation} \label{eq:definition_extremal_length_analytic}
\ext(\Gamma)=\sup_\rho \frac{L_\rho(\Gamma)^2}{A(\rho)}
\end{equation}
where supremum runs over all measurable conformal metrics
$\rho=\rho(z)|dz|^2$ and
$$
L_\rho(\Gamma)
=
\inf_{\gamma\in \Gamma}\int_\gamma \rho(z)^{1/2}|dz| \quad
\mbox{and}\quad
A(\rho)
=
\iint_R\rho(z)dxdy.
$$
When a metric $\rho$ attains the supremum in \eqref{eq:definition_extremal_length_analytic},
it is called an \emph{extremal metric}.
Extremal length is a \emph{conformal invariant}
and a $K$-quasiconformal $K$-invariant in the sense that
\begin{equation}
\label{eq:conformal_invariant}
\ext (h(\Gamma))\le K\,\ext (\Gamma)
\end{equation}
for a $K$-quasiconformal mapping $h:R\to h(R)$,
a Riemann surface $R$,
and a family $\Gamma$ of rectifiable curves on $R$.

\begin{proposition}[See \cite{Ahlfors} and \cite{Ahlfors2}]
\label{prop:extremal_length}
Let $\Gamma_1$ and $\Gamma_2$ be two families of rectifiable curves
on a Riemann surface $R$.
\begin{itemize}
\item[{\rm (1)}]
If any curve in $\Gamma_1$ is contained in a subdomain $D_1$ of $R$,
the extremal length of $\Gamma_1$ on $R$ is equal to
the extremal length of $\Gamma_1$ on $D_1$.
\item[{\rm (2)}]
If any curve in $\Gamma_2$ contains a curve in $\Gamma_1$,
$\ext (\Gamma_1)\le \ext (\Gamma_2)$.
\item[{\rm (3)}]
Let $\Gamma_3$ be a family of closed curves,
and suppose that $\Gamma_1$ and $\Gamma_2$ are contained in 
disjoint open sets in $R$.
If every curve in $\Gamma_3$ contains
a curve in $\Gamma_1$ and a curve in $\Gamma_2$,
$\ext(\Gamma_3)\ge \ext(\Gamma_1)+\ext(\Gamma_2)$.
\end{itemize}
\end{proposition}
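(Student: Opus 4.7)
My plan is to prove each of the three parts by working directly from the definition \eqref{eq:definition_extremal_length_analytic}, producing and manipulating admissible conformal metrics on the two sides of each inequality.

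For (1), I would handle the two inequalities by zero-extension and restriction. Any admissible $\rho$ on $D_1$ extends to $R$ by setting it equal to zero on $R\setminus D_1$; since every curve of $\Gamma_1$ lies in $D_1$, both the length $L_\rho(\Gamma_1)$ and the total area $A(\rho)$ are unchanged, giving $\ext_R(\Gamma_1)\ge \ext_{D_1}(\Gamma_1)$. Conversely, restricting any admissible $\rho$ on $R$ to $D_1$ leaves $L_\rho(\Gamma_1)$ unchanged but can only decrease $A(\rho)$, yielding the reverse inequality. For (2), I would observe that each $\gamma_2\in\Gamma_2$ contains some $\gamma_1\in\Gamma_1$ as a sub-arc, so for any admissible $\rho$ we have $\int_{\gamma_2}\rho^{1/2}|dz|\ge \int_{\gamma_1}\rho^{1/2}|dz|\ge L_\rho(\Gamma_1)$. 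Taking the infimum over $\gamma_2$ gives $L_\rho(\Gamma_2)\ge L_\rho(\Gamma_1)$, and dividing by $A(\rho)$ and taking the supremum proves (2).

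For the main part (3), my plan is to form a weighted sum of near-optimal metrics for $\Gamma_1$ and $\Gamma_2$ and then optimize the weights. Take admissible $\rho_i$ with $L_{\rho_i}(\Gamma_i)=1$ and write $A_i=A(\rho_i)$. For positive weights $c_1,c_2$, set $\rho=c_1\rho_1+c_2\rho_2$; its total area is $c_1A_1+c_2A_2$ by linearity. The disjointness hypothesis is what ensures that a formal sum $\gamma_1+\gamma_2\in\Gamma_1+\Gamma_2$ is a rectifiable chain whose length decomposes cleanly as $\int_{\gamma_1}\sqrt{\rho}|dz|+\int_{\gamma_2}\sqrt{\rho}|dz|$, with no double-counting where the two traces could overlap. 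Applying the pointwise bound $\sqrt{c_1\rho_1+c_2\rho_2}\ge \sqrt{c_i\rho_i}$ on $\gamma_i$ yields $\int_{\gamma_i}\sqrt{\rho}|dz|\ge \sqrt{c_i}$, hence $L_\rho(\Gamma_1+\Gamma_2)\ge \sqrt{c_1}+\sqrt{c_2}$. Consequently
\[
\ext(\Gamma_1+\Gamma_2)\ge \frac{(\sqrt{c_1}+\sqrt{c_2})^2}{c_1A_1+c_2A_2}.
\]
A short calculus optimization, taking $c_i$ proportional to $A_{3-i}^{2}$, maximizes the right-hand side to $1/A_1+1/A_2$. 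Letting $\rho_i$ tend to an extremal metric so that $1/A_i\to\ext(\Gamma_i)$ then gives (3).

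The only step requiring any real ingenuity is the construction and optimization of the weighted metric in (3); the disjointness hypothesis is used precisely at the point where I identify the length of the composite chain $\gamma_1+\gamma_2$ with the sum of two independent line integrals. Everything else is a direct exploitation of the sup/inf structure in \eqref{eq:definition_extremal_length_analytic}.
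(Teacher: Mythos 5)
The paper does not prove this proposition; it is quoted from Ahlfors' \emph{Lectures on Quasiconformal Mappings}, and your argument is correct and essentially the classical proof found there: (1) by zero-extension versus restriction of admissible metrics, (2) by monotonicity of $L_\rho$, and (3) by the weighted combination $c_1\rho_1+c_2\rho_2$, where your optimization $c_i\propto A_{3-i}^{2}$ yielding $1/A_1+1/A_2$ is the standard normalization in disguise. The only point worth adding is one sentence for the degenerate cases of (3) where some $\ext(\Gamma_i)$ is $0$ or $\infty$, since the normalization $L_{\rho_i}(\Gamma_i)=1$ with $0<A(\rho_i)<\infty$ may then be unavailable; in those cases the inequality follows at once from part (2), because every $\gamma_1+\gamma_2$ contains each $\gamma_i$.
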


\subsubsection{Extremal length and modulus of annulus}
For an annulus $A$,
we denote by $\ext (A)$ the extremal length
of the family of simple closed curves which are homotopic to the core
curve of $A$.
The \emph{modulus} of $A$ is the reciprocal of the extremal length of $A$.
If $A$ is conformally equivalent to the flat annulus $\{r_1<|z|<r_2\}$,
one can see that ${\rm Mod}(A)=\frac{1}{2\pi}\log (r_2/r_1)$.

\begin{proposition}[cf. Proposition 9.1 of \cite{Mi1}]
\label{prop:extremal_length_upper}
Let $A$ be an annulus.
Let $\{\beta_k\}_{k=1}^N$ be mutually disjoint Jordan arcs joining components
of $\partial A$ such that $\beta_{k-1}$ and $\beta_{k+1}$
divides $\beta_k$ from the
other arcs {\rm (}set $\beta_{N+1}=\beta_1${\rm )}.
Let $\Gamma_k$ be the set of paths in $A-\cup_{k=1}^N\beta_k$
connecting $\beta_k$ and $\beta_{k+1}$.
Let $\rho$ be the extremal metric for $\ext (A)$ on $A$ such that $A(\rho)=1$.
Suppose that the $\rho$-length of $\beta_k$ is bounded for all $k=1,\cdots,N$.
Then,
$$
\ext (A)^{1/2}\le
\left(
\sum_{k=1}^N\ext (\Gamma_k)
\right)^{1/2}
+B
$$
where $B$ is the sum  of $\rho$-lengths of $\beta_k$'s.
\end{proposition}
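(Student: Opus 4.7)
The plan is a direct length-area argument using the extremal metric $\rho$ itself. For each $k$, denote by $A_k$ the component of $A\setminus\bigcup_j \beta_j$ bounded by $\beta_k$ and $\beta_{k+1}$ (well-defined by the cyclic separation hypothesis), and set $\ell_k=L_\rho(\Gamma_k)$ and $a_k=\int_{A_k}\rho\,dxdy$. Every path of $\Gamma_k$ lies in the single component $A_k$, so Proposition \ref{prop:extremal_length}~(1) combined with the very definition of extremal length gives
$$
\ext(\Gamma_k)\;\ge\;\frac{L_\rho(\Gamma_k)^2}{a_k}\;=\;\frac{\ell_k^2}{a_k},
$$
hence $\ell_k\le \sqrt{a_k\cdot\ext(\Gamma_k)}$. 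Applying Cauchy--Schwarz and using $\sum_k a_k\le A(\rho)=1$ yields
$$
\sum_{k=1}^N \ell_k\;\le\;\left(\sum_{k=1}^N a_k\right)^{1/2}\left(\sum_{k=1}^N \ext(\Gamma_k)\right)^{1/2}\;\le\;\left(\sum_{k=1}^N \ext(\Gamma_k)\right)^{1/2}.
$$

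Next I would exhibit a test curve $\gamma_0$ homotopic to the core of $A$ whose $\rho$-length is nearly $\sum_k \ell_k+B$. Given $\epsilon>0$, choose $\widetilde\gamma_k\in\Gamma_k$ with $\int_{\widetilde\gamma_k}\sqrt{\rho}\,|dz|\le \ell_k+\epsilon/N$, and concatenate $\widetilde\gamma_1,\widetilde\gamma_2,\ldots,\widetilde\gamma_N$ by inserting, between $\widetilde\gamma_k$ and $\widetilde\gamma_{k+1}$, the subarc of $\beta_{k+1}$ that joins their endpoints on $\beta_{k+1}$. Because of the cyclic separation hypothesis on the $\beta_k$'s, the resulting closed curve wraps exactly once around $A$ and is therefore homotopic to the core curve. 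Its total $\rho$-length is bounded by $\sum_k \ell_k+\epsilon+B$, since the inserted subarcs contribute in aggregate at most $B$. Since $\rho$ is the unit-area extremal metric for $\ext(A)$, we have $\ext(A)^{1/2}=L_\rho(\text{core})\le \int_{\gamma_0}\sqrt\rho\,|dz|\le \sum_k \ell_k+\epsilon+B$. Letting $\epsilon\to 0$ and combining with the previous display gives the claimed inequality.

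The main obstacle I anticipate is the construction in the second paragraph: one must verify that the concatenation is genuinely homotopic to the core curve rather than to a higher power or to a null-homotopic loop. This is exactly what the cyclic separation hypothesis on the $\beta_k$'s is there to guarantee, and the hypothesis that each $\rho$-length $L_\rho(\beta_k)$ is bounded is what makes $B$ finite and the correction term actually meaningful. Everything else is a routine Cauchy--Schwarz estimate, so the heart of the proof lies in making this topological concatenation precise.
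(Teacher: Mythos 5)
Your argument is correct and is precisely the standard length--area proof of this statement: test the extremal unit-area metric $\rho$ against each family $\Gamma_k$ to get $L_\rho(\Gamma_k)\le\sqrt{a_k\,\ext(\Gamma_k)}$, sum via Cauchy--Schwarz using $\sum_k a_k\le 1$, and then exhibit a competitor for the core family by concatenating near-minimizers of the $\Gamma_k$ with subarcs of the $\beta_k$, whose total contribution is at most $B$. The paper itself gives no proof here (it defers to Proposition 9.1 of \cite{Mi1}), but your route is the same one used there, and your identification of the topological step --- that the cyclic separation hypothesis forces the concatenated loop to wind exactly once and hence be homotopic to the core --- is exactly the point that needs care.
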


\subsubsection{Extremal lengths of simple closed curves}
For a Riemann surface $Y$ and a simple closed curve $\beta$ on $Y$,
we define the \emph{extremal length} $\ext_Y(\beta)$ of $\beta$ on $Y$
is the extremal length of a family of rectifiable closed curves on $Y$
homotopic to $\beta$.
The extremal length is represented geometrically by
\begin{equation} \label{eq:definition_extremal_length_geometric}
\ext_Y(\beta)=1/\sup_A \{{\rm Mod}(A)\}=\inf_A\ext (A)
\end{equation}
where $A$ runs all annuli on $Y$ whose core is homotopic to $\beta$
(cf. e.g. \cite{Ker} and \cite{Strebel}).

\subsection{Measured foliations}
\label{subsec:MF}
The set of formal products $\mathbb{R}_+\otimes \mathcal{S}
=\{t\alpha\mid t\ge 0,\alpha\in \mathcal{S}\}$
is embedded into $\mathbb{R}_+^{\mathcal{S}}$
via the (geometric) intersection number:
$$
\mathbb{R}_+\otimes \mathcal{S}\ni t\alpha\mapsto
[\mathcal{S}\ni \beta\mapsto t\,i(\alpha,\beta)]\in \mathbb{R}_+^{\mathcal{S}}.
$$
The closure $\mathcal{MF}=\mathcal{MF}(X)$
of the image in $\mathbb{R}_+^{\mathcal{S}}$ is called the
\emph{space of measured foliations}
on $X$,
where we topologize $\mathbb{R}_+^{\mathcal{S}}$ with
the topology of the pointwise convergence.
The \emph{space $\mathcal{PMF}=\mathcal{PMF}(X)$
of projective measured foliations} is the quotient space
$(\mathcal{MF}-\{0\})/\mathbb{R}_{>0}$.
It is known that $\mathcal{MF}$ and $\mathcal{PMF}$
are homeomorphic to $\mathbb{R}^{6g-6+2n}$
and $S^{6g-7+2n}$,
respectively (cf. \cite{FLP}).
It is also known that when we define
the intersection number between weighted simple closed curves
$t\alpha,s\beta\in \mathbb{R}_+\otimes \mathcal{S}$
by the homogeneous equation
$i(t\alpha,s\beta)=ts\,i(\alpha,\beta)$,
the intersection number function extends continuously
on $\mathcal{MF}\times \mathcal{MF}$.
To a measured foliation $G$,
we associate
a singular foliation and a transverse measure to the underlying foliation (cf. \cite{FLP}).
In this paper,
we denote by $\int_\beta G$
the integration of the corresponding transverse measure over a path $\beta$
transverse to the underlying foliation.

A measured foliation $G$ is called \emph{rational}
if $G$ satisfies
$$
i(\beta,G)=\sum_{i=1}^k w_ii(\beta,\alpha_i)
$$
for some $w_i>0$ and $\alpha_i\in \mathcal{S}$
such that $i(\alpha_i,\alpha_j)=0$ and $\alpha_i\ne \alpha_j$
for $i,j=1,\cdots,k$ with $i\ne j$.
We write $G=\sum_{i=1}^k w_k\alpha_k$ for such a measured foliation.
A rational measured foliation $G=\sum_{i=1}^k w_k\alpha_k$ is
\emph{maximal} if any component of $X-\cup_{i=1}^k \alpha_i$ is a pair of pants.
In this case,
$k=3g-3+m$.

In \cite{Ker},
S. Kerckhoff showed that when we put $\ext_X(t\beta)=t^2\ext_X(\beta)$
for $t\beta\in \mathbb{R}_+\otimes \mathcal{S}$,
the extremal length extends continuously to $\mathcal{MF}$.
We define
the \emph{unit sphere}
$$
\mathcal{MF}_1=\{F\in \mathcal{MF}\mid \ext_X(F)=1\},
$$
in $\mathcal{MF}$
which is homeomorphic to $\mathcal{PMF}$ via the projection
$\mathcal{MF}-\{0\}\to\mathcal{PMF}$.

It is  known
that
the following inequality,
called \emph{Minsky's inequality}
holds:
\begin{equation}
\label{eq:minsky_inequality}
i(F,G)^2\le \ext_X(F)\,\ext_X(G)
\end{equation}
for all $F,G\in \mathcal{MF}$
(cf. Lemma 5.1 of \cite{Minsky2}).
Minsky's inequality
is sharp in the sense that for any $G\in \mathcal{MF}-\{0\}$,
there is a unique $F\in \mathcal{MF}$,
up to multiplying a positive constant,
which satisfies the equality in
\eqref{eq:minsky_inequality}
(cf. Theorem 5.1 in \cite{GM}).
%
%
%
%

\subsection{Teichm\"uller space}
\label{subsec:teichmuller-space}
The \emph{Teichm\"uller space} $T(X)$ of $X$
is the set of equivalence classes of marked Riemann surfaces
$(Y,f)$ where $Y$ is a Riemann surface and $f:X\to Y$
is a quasiconformal mapping.
Two marked Riemann surfaces $(Y_1,f_1)$ and $(Y_2,f_2)$ are \emph{Teichm\"uller equivalent}
if there is a conformal mapping $h:Y_1\to Y_2$ which is homotopic to $f_2\circ f_1^{-1}$.
Throughout this paper,
we consider the Teichm\"uller space as a pointed space with
basepoint $x_0=(X,id)$.


\subsubsection{Teichm\"uller distance and Kerckhoff's formula}
\label{subsubsec:teichmuller-distance}
The \emph{Teichm\"uller distance} between $y_1=(Y_1,f_1)$ and
$y_2=(Y_2,f_2)\in T(X)$ is,
by definition,
the half of the logarithm
of the maximal dilatation of
the extremal quasiconformal mapping
between $Y_1$ and $Y_2$ preserving their markings
(cf. \cite{IT})

For $F\in \mathcal{MF}$ and $y=(Y,f)\in T(X)$,
the \emph{extremal length} of $F$ on $y$ 
is defined by
$$
\ext_y(F)=\ext_Y(f(F)).
$$
In \cite{Ker},
S. Kerckhoff gave the geometric interpretation of the Teichm\"uller distance
in terms of the extremal lengths of measured foliations:
\begin{equation} \label{eq:Kerckhoff_formula}
d_T(y_1,y_2)=
\frac{1}{2}
\log 
\sup_{H\in \mathcal{MF}-\{0\}}
\frac{\ext_{y_1}(H)}{\ext_{y_2}(H)}
=
\frac{1}{2}
\log 
\max_{H\in \mathcal{MF}_1}
\frac{\ext_{y_1}(H)}{\ext_{y_2}(H)}.
\end{equation}
Teichm\"uller space is topologized with the Teichm\"uller distance.
Under this topology,
the extremal length of a measured foliation varies continuously on
$T(X)$ (See also \eqref{eq:conformal_invariant}).

\subsubsection{Quadratic differentials and Hubbard-Masur-Gardiner's theorem}
For a holomorphic quadratic differential $q=q(z)dz^2$ on a Riemann surface $Y$,
we define a singular flat metric $|q|=|q(z)||dz|^2$.
We call here this metric the \emph{$q$-metric}.

In \cite{HM},
Hubbard and Masur observed that
for $y=(Y,f)\in T(X)$ and $G\in \mathcal{MF}-\{0\}$,
there is a unique holomorphic quadratic differential $J_{G,y}$ on $Y$
whose vertical foliation is equal to $f(G)$
when $X$ is closed.
In \cite{Gar},
Gardiner extends their result to punctured surfaces
by applying his minimal Dirichlet principle for measured foliations.
In any case,
we obtain 
$$
i(\beta,G)=\inf_{\beta'\in f(\beta)}\int_{\beta'}
\left|
{\rm Re}\sqrt{J_{G,y}}
\right|
$$
for all $\beta\in \mathcal{S}$,
and from
the minimum Dirichlet principle
$$
\ext_{y}(G)=\|J_{G,y}\|=\iint_Y|J_{G,y}|.
$$
Namely,
the extremal length is the area of the $J_{G,y}$-metric.
From the uniquness of the differential,
we can see that $J_{tG,y}=t^2J_{G,y}$ for $t>0$ and $G\in \mathcal{MF}$.
When $G$ is rational,
we call the differential $J_{G,y}$ the \emph{Jenkins-Strebel
differential for $G$}.

%

\subsection{Teichm\"uller rays}
\label{subsec:Teichmuller_rays}
Let $x=(X,f)\in T(X)$
and $[G]\in \mathcal{PMF}$.
Let $R_{G,x_0}(t)$ be the point of $T(X)$
represented by the the Beltrami coefficient
\begin{equation} \label{eq:beltrami}
\tanh(t)\frac{|J_{G,x_0}|}{J_{G,x_0}}
\end{equation}
for $t\ge 0$.
Notice that the Beltrami
differential \eqref{eq:beltrami} depends only on the projective class of $G$.
Teichm\"uller's theorem asserts that
$$
[0,\infty)\ni t\mapsto R_{G,x_0}(t)\in T(X)
$$
is
an isometric embedding with respect to the Teichm\"uller distance
(cf. \cite{IT}).
We call $R_{G,x_0}$ the \emph{Teichm\"uller (geodesic) ray
associated to $[G]\in \mathcal{PMF}$}.
It is known that
$$
\mathcal{PMF}\times [0,\infty)/(\mathcal{PMF}\times \{0\})\ni ([G],t)
\mapsto R_{G,x_0}(t)\in T(X)
$$
is a homeomorphism (cf. \cite{Bers2} and \cite{IT}).
One can see that
\begin{equation}
\label{eq:extremal_length_teichmuller_rays}
\ext_{R_{G,x_0}(t)}(G)=e^{-2t}\ext_{x_0}(G)
\end{equation}
for $G\in \mathcal{MF}$.


\subsection{Gardiner-Masur boundary revisited}
\label{subsec:GM_revisited}
For $y=(Y,f)\in T(X)$,
we let
$$
K_y=e^{2d_T(x_0,y)}.
$$
Namely,
$K_y$ is the maximal dilatation of the extremal quasiconformal mapping
between $X$ to $Y$ homotopic to the marking $f$.
Consider a continuous function on $\mathcal{MF}$
defined by
\begin{equation}
\label{eq:function_E}
\mathcal{E}_y(F)=\left(
\frac{\ext_y(F)}{K_y}\right)^{1/2}
\end{equation}
for $y\in T(X)$.
Then,
in \cite{Mi1},
the author observed that for any $p\in \partial_{GM}T(X)$,
there is a function $\mathcal{E}_p$ on $\mathcal{MF}$
such that the function $\mathcal{S}\ni \beta\mapsto \mathcal{E}_p(\beta)$
represents $p$ and when a sequence $\{y_n\}_{n=1}^\infty\subset T(X)$
converges to $p$ in the Gardiner-Masur compactification,
there are $t_0>0$ and a subsequence $\{y_{n_j}\}_{j=1}^\infty$ of $\{y_n\}_{n=1}^\infty$
such that $\mathcal{E}_{y_{n_j}}$ converges to $t_0\,\mathcal{E}_p$
uniformly on any compact set of $\mathcal{MF}$.

As noticed in \S\ref{subsec:GMbdy},
the space $\mathcal{PMF}$ of projective measured foliations
is contained in $\partial_{GM}T(X)$.
By definition,
for $[G]\in \mathcal{PMF}$,
the function $\mathcal{E}_{[G]}$ corresponding to $[G]$
is nothing but
a positive multiple of the intersection number function
associated to $G$.
Namely,
there is a constant $t_0=t_0([G],x_0)>0$ such that
$$
\mathcal{E}_{[G]}(F)=t_0\cdot i(F,G)
$$
for $F\in \mathcal{MF}$.

\section{Metric boundary and horofunction boundary}

\subsection{Metric boundary and horofunction boundary}
\label{subsec:metricboundary}
Let $(M,\rho)$ be a locally compact metric space.
Let $C(M)$ be the space of complex valued continuous functions on $M$,
equipped with the topology of uniform convergence on compact
subsets of $M$.
Let $C_*(M)$ be $C(M)$ factored by
the addition of constant functions.
Then,
the mapping
$$
M\ni y\mapsto [M\ni x\mapsto \rho(x ,y)]\in C(M)
$$
is a continuous embedding.
Furthermore,
this embedding descends to a continuous embedding
from $M$ into $C_*(M)$.
The closure $\mathcal{C}\ell(M)\subset C_*(M)$
of the image of this embedding is called
the \emph{horofunction compactification}
and the complement $\mathcal{C}\ell(M)\setminus M$
is said to be the \emph{horofunction boundary} of $M$
(cf. \cite{Gromov},
\cite{BriF},
and \cite{Rieffel}).
M. Rieffel pointed out that the horofunction boundary of $M$ is canonically identified with
the metric boundary of $M$
as discussed in the introduction
(cf. \S4 in \cite{Rieffel}).

In \cite{LiuSu},
L. Liu and W. Su showed
that the horofunction compactification of the Teichm\"uller space
with the Teichm\"uller distance is canonically identified with
the Gardiner-Masur compactification
in the sense that the identity mapping $T(X)\to T(X)$ extends
to a homeomorphism between them.

\subsection{Almost geodesics ray}
\label{subsec:almost_geodesics}
Let $(M,\rho)$ be a metric space.
Let $T\subset [0,\infty)$ be an unbounded set with $0\in T$.
A mapping $\gamma:T\to M$ is said to be an \emph{almost geodesic ray}
if for any $\epsilon>0$ there is an $N>0$ such that
for all $t,s\in T$ with $t\ge s\ge N$,
$$
|\rho(\gamma(t),\gamma(s))+\rho(\gamma(s),\gamma(0))-t|<\epsilon
$$
(cf. Definition 4.3 of \cite{Rieffel}).
By definition,
any geodesic ray is an almost geodesic ray.
When $(M,\rho)$ is a pointed metric space,
we assume in addition that $\gamma (0)$ is equal to the basepoint
(cf. the assumption of Lemma 4.5 in \cite{Rieffel}).
By definition,
for any unbounded subset $T_0\subset T$ with $0\in T_0$,
the restriction $\gamma\mid_{T_0}:T_0\to M$ is also an almost geodesic ray.
We call the restriction a \emph{subray} of an almost geodesic ray $\gamma:T\to M$.

As noticed in the introduction,
M.Rieffel showed that any almost geodesic ray has a limit in the metric compactification.
A point of the metric boundary or the horofunction boundary
of $M$ is said to be a \emph{Busemann point}
if it is the limit point of an almost geodesic ray
(cf. \cite{Rieffel}).

%

\subsection{Convergence of almost geodesics rays}
In this section,
we shall check that any almost geodesic ray in $T(X)$
converges in the Gardiner-Masur compactification.
Although this follows from a fundamental property of
the metric boundary discussed in the previous section and Liu and Su's
work \cite{LiuSu},
we give a simple proof from Teichm\"uller theory which is of independent interest.
We remark that in \cite{Mi3}
using a different idea the author observed that
any Teichm\"uller ray $R_{G,x}(t)$ admits a limit for all $[G]$ in $\mathcal{PMF}$.

Let $\gamma:T\to T(X)$ be an almost geodesic ray
with basepoint $x_0\in T(X)$.
By definition,
$\gamma(0)=x_0$ and
for any $\epsilon>0$,
there is an $N$ such that
\begin{equation}
\label{eq:almost_geodesic}
|d_T(\gamma(t),\gamma(s))+d_T(\gamma(s),\gamma(0))-t|<\epsilon
\end{equation}
for all $t\ge s\ge N$.
From Kerckhoff's formula \eqref{eq:Kerckhoff_formula},
\eqref{eq:almost_geodesic} is equivalent to
\begin{equation}
\label{eq:almost_geodesic2}
e^{t-\epsilon}\le 
\max_{H\in \mathcal{MF}_1}
\frac{\ext_{\gamma(t)}(H)^{1/2}}{\ext_{\gamma(s)}(H)^{1/2}}\cdot
K_{\gamma(s)}^{1/2}
\le
e^{t+\epsilon}.
\end{equation}
In particular,
we have
\begin{equation}\label{eq:Kgamma-t=s}
e^{t-\epsilon}\le 
K_{\gamma(t)}^{1/2}
\le
e^{t+\epsilon}
\end{equation}
when we set $s=t$ in \eqref{eq:almost_geodesic2}.
Therefore,
we deduce
$$
\frac{\ext_{\gamma(t)}(H)^{1/2}}{\ext_{\gamma(s)}(H)^{1/2}}
\cdot
K_{\gamma(s)}^{1/2}
\le
e^{\epsilon}\cdot e^t
\le
e^{\epsilon}\cdot 
e^{\epsilon}K_{\gamma(t)}^{1/2},
$$
and hence
\begin{equation}
\label{eq:almost_geodesic3}
\mathcal{E}_{\gamma(t)}(H)\le e^{2\epsilon}\mathcal{E}_{\gamma(s)}(H)
\end{equation}
for all $H\in \mathcal{MF}$
and $t\ge s\ge N$
(cf. \eqref{eq:function_E}).

We set
$$
\mathcal{E}'(F)=\liminf_{T\ni t\to \infty}\mathcal{E}_{\gamma(t)}(F)
$$
for $F\in \mathcal{MF}$.
From \eqref{eq:almost_geodesic3},
for all $\beta\in \mathcal{S}$,
the limit of any converging
subray in $\{\mathcal{E}_{\gamma(t)}(\beta)\}_{t\in T}$
coincides with $\mathcal{E}'(\beta)$,
which implies that
$\gamma:T\to T(X)$ converges
in the Gardiner-Masur compactification
as $t\to \infty$.

\section{Measured foliations as Busemann points}
\subsection{Function $\mathcal{E}_{[G]}$ for $[G]\in \mathcal{PMF}$}
We first notice the following.

\begin{lemma}
\label{lem:t_0_is_one}
Let $G\in \mathcal{MF}_1$.
Take a sequence $\{y_n\}_n\subset T(X)$ converging to the projective class $[G]$
in the Gardiner-Masur compactification.
Then,
$\mathcal{E}_{y_n}$ converges to
the intersection number function $\mathcal{MF}\ni F\mapsto i(F,G)$
of $G$
uniformly on any compact sets of $\mathcal{MF}$.
\end{lemma}

\begin{proof}
The assumption means that
there are a subsequence $\{y_{n_j}\}_j$ and $t_0>0$ such that
$\mathcal{E}_{y_{n_j}}$ converges to
the function  $\mathcal{MF}\ni F\mapsto t_0\, i(F,G)$
uniformly on any compact sets of $\mathcal{MF}$
(cf. \S\ref{subsec:GM_revisited}).

We claim that $t_0=1$,
which means that
the limit is independent of the choice of subsequences.
Indeed,
since $\mathcal{MF}_1$ is compact,
the convergence $\mathcal{E}_{y_n}(\,\cdot\,)\to t_0\,i(\,\cdot\,,G)$
is uniform on $\mathcal{MF}_1$.
Therefore,
$$
1=\max_{H\in \mathcal{MF}_1}
\mathcal{E}_{y_n}(H)
=\max_{H\in \mathcal{MF}_1}t_0\,i(H,G)
=t_0\ext_{x_0}(G)^{1/2}=t_0
$$
since Minsky's inequality is sharp as noticed in \S\ref{subsec:MF}.
\end{proof}

\subsection{The case where $[G]$ is a Busemann point}
\label{subsec:case_where_G_busemann}
Suppose that the projective class $[G]$ of $G\in \mathcal{MF}$ is a Busemann point
in the horofunction compactification
of Teichm\"uller space with respect to the Teichm\"uller metric.
By definition and Liu and Su's work \cite{LiuSu},
there is an almost-geodesic $\gamma:T\to T(X)$ such that
$\gamma(t)\to [G]$ in the Gardiner-Masur closure.
%


\begin{lemma}[Behavior of extremal length]
\label{lem:behavior_of_K}
When an almost geodesic ray $\gamma\colon T\to T(X)$ with $\gamma(0)=x_0$
converges to the projective class
$[G]$ of $G\in \mathcal{MF}_1$,
we have
\begin{equation}
\label{eq:extremal_length_G_s}
\lim_{t\to \infty}\|J_{G,\gamma(t)}\|\cdot K_{\gamma(t)}
=
\lim_{t\to \infty}\ext_{\gamma(t)}(G)\cdot K_{\gamma(t)}=1.
\end{equation}
\end{lemma}

\begin{proof}

From \eqref{eq:Kgamma-t=s} and Lemma \ref{lem:t_0_is_one},
by dividing every term in \eqref{eq:almost_geodesic2}
by $K_{\gamma(t)}^{1/2}=e^{d_T(x_0,\gamma(t))}$
and letting $t\to \infty$,
we get
\begin{equation}
\label{eq:almost-geodesic_Teichmuller3}
e^{-2\epsilon}\le
\max_{H\in \mathcal{MF}_1}
\frac{i(H,G)}{\ext_{\gamma(s)}(H)^{1/2}}
\max_{H\in \mathcal{MF}_1}\ext_{\gamma(s)}(H)^{1/2}
\le
e^{2\epsilon}
\end{equation}
for $s\ge N$.
From Minsky's inequality \eqref{eq:minsky_inequality}
and Kerckhoff's formula,
we have
$$
\max_{H\in \mathcal{MF}_1}
\frac{i(H,G)}{\ext_{\gamma(s)}(H)^{1/2}}
=
\sup_{H\in \mathcal{MF}-\{0\}}
\frac{\,i(H,G)}{\ext_{\gamma(s)}(H)^{1/2}}
=\ext_{\gamma(s)}(G)^{1/2}
$$
and
$$
\max_{H\in \mathcal{MF}_1}\ext_{\gamma(s)}(H)^{1/2}
=K_{\gamma(s)}.
$$
Hence we get
\begin{equation*}
e^{-2\epsilon}\le
\ext_{\gamma(s)}(G)^{1/2}\cdot 
K_{\gamma(s)}^{1/2}
\le
e^{2\epsilon}
\end{equation*}
for $s\ge N$
from \eqref{eq:almost-geodesic_Teichmuller3}.
This implies \eqref{eq:extremal_length_G_s}.
\end{proof}

Although the following corollary will not be used in the remainder of this paper,
we include it because it helps to understand the asymptotic behavior of almost geodesic rays.

\begin{corollary}
\label{lcoro:limit_is_G}
Let $\gamma:T\to T(X)$ be an almost geodesic ray
with $\gamma(0)=x_0$
which converges to the projective class of $G\in \mathcal{MF}_1$.
We take $G_t\in \mathcal{MF}_1$
such that $R_{G_t,x_0}(d_T(x_0,\gamma(t)))=\gamma(t)$.
Then,
$G_t$ converges to $G$ as $t\to \infty$.
\end{corollary}

\begin{proof}
Let $G_\infty\in \mathcal{MF}_1$ be an accumulation point of $\{G_t\}_{t\in T}$.
From Lemma \ref{lem:t_0_is_one},
we obtain
\begin{align}
i(\alpha,G_\infty)
&=
\lim_{T\ni t\to \infty}i(\alpha,G_t)
\le
\lim_{T\ni t\to \infty}
\ext_{\gamma(t)}(\alpha)^{1/2}
\ext_{\gamma(t)}(G_t)^{1/2}
\nonumber\\
&\le
e^{\epsilon}\lim_{T\ni t\to \infty}
e^{-t}\ext_{\gamma(t)}(\alpha)^{1/2}
\le 
e^{2\epsilon}\lim_{T\ni t\to \infty}
\mathcal{E}_{\gamma(t)}(\alpha)
\nonumber\\
&=e^{2\epsilon}\,i(\alpha,G).
\label{eq:comparizon_G_infty_to_G2}
\end{align}
Since $\epsilon>0$ is taken arbitrary,
we get
\begin{equation}
\label {eq:comparizon_G_infty_to_G}
i(\alpha,G_\infty)\le i(\alpha, G)
\end{equation}
for all $\alpha\in \mathcal{S}$.
Thus,
it follows from the Gardiner's minimal norm property
that
$\|J_{G_\infty,x_0}\|\le \|J_{G,x_0}\|$
(See \cite{Gar}. See also Theorem 3.2 of \cite{MS3}).
On the other hand,
since
$\|J_{G_\infty,x_0}\|=1=\|J_{G,x_0}\|$
and the conclusion from the equality of the minimal norm property,
we get $J_{G_\infty,x_0}=J_{G,x_0}$
and $G_\infty=G$.
\end{proof}

It follows from this corollary
that if a geodesic ray $R_H$ converges to the projective class $[G]$
of $G\in \mathcal{MF}$
in the Gardiner-Masur compactification,
$H$ and $G$ are projectively equivalent.


\section{Measured foliations with foliated annuli}
In this section,
we give the asymptotic behavior of
moduli of characteristic annuli corresponding to foliated annuli
and
the twisting number of closed geodesics on characteristic annuli.
These observations will be used for proving Theorem \ref{thm:main2}
in the next section.

As in \S\ref{subsec:case_where_G_busemann},
we continue to suppose that the projective class $[G]$ of $G\in \mathcal{MF}_1$
is the limit of an almost geodesic ray $\gamma\colon T\to T(X)$.
Throughout this section
we suppose in addition that $G$ has a component which is a foliated annulus
with core $\alpha\in \mathcal{S}$.
Let $w_0$ be the width of the foliated annulus for $\alpha$ in $G$.
For the simplicity,
we set $J_{t}=J_{G,\gamma(t)}$.
Let $\gamma(t)=(Y_t,f_t)$ for $t\in T$
and $A_{t}\subset Y_t$ be the characteristic annulus of $J_t$ for $\alpha$.

\subsection{Moduli of characteristic annuli}
The modulus of $A_t$ behaves asymptotically as follows.
\begin{lemma}
\label{lem:char_annulus_modulus}
${\rm Mod}(A_{t})\asymp K_{\gamma(t)}$
as $t\to \infty$.
\end{lemma}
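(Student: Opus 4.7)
The plan is to identify the Euclidean parameters of the flat cylinder $A_t$ in the $|J_t|$-metric: its circumference $\ell_t$, i.e.\ the $|J_t|$-length of the closed vertical trajectory $\alpha$, and its height, i.e.\ the $|J_t|$-distance between the two boundary components measured in the horizontal direction. Because the foliated annulus component of $G$ along $\alpha$ carries weight $w_0$, any transverse arc across $A_t$ has $G$-transverse measure equal to $w_0$, so the height of $A_t$ equals $w_0$, \emph{independently of $t$}. It follows that
\begin{equation*}
{\rm Mod}(A_t)=\frac{w_0}{\ell_t},\qquad \int_{A_t}|J_t|=w_0\,\ell_t.
\end{equation*}

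For the inequality ${\rm Mod}(A_t)\gtrsim K_{\gamma(t)}$, I would observe that the $|J_t|$-area of $A_t$ is bounded by the total mass $\|J_t\|=\ext_{\gamma(t)}(G)$. Combining $w_0\ell_t\le \ext_{\gamma(t)}(G)$ with \eqref{eq:extremal_length_G_s}, which gives $\ext_{\gamma(t)}(G)\asymp K_{\gamma(t)}^{-1}$, yields $\ell_t\lesssim K_{\gamma(t)}^{-1}$, hence ${\rm Mod}(A_t)\gtrsim K_{\gamma(t)}$. For the reverse inequality, feeding $A_t$ into \eqref{eq:definition_extremal_length_geometric} gives $\ext_{\gamma(t)}(\alpha)\le 1/{\rm Mod}(A_t)=\ell_t/w_0$, while the quasiconformal distortion \eqref{eq:conformal_invariant} applied to the $K_{\gamma(t)}$-quasiconformal marking $f_t$ gives $\ext_{\gamma(t)}(\alpha)\ge \ext_{x_0}(\alpha)/K_{\gamma(t)}$. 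Combining these produces $\ell_t\ge w_0\,\ext_{x_0}(\alpha)/K_{\gamma(t)}$, whence ${\rm Mod}(A_t)\le K_{\gamma(t)}/\ext_{x_0}(\alpha)$.

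The substantive input is \eqref{eq:extremal_length_G_s}, which encodes the almost-geodesic hypothesis through Lemma~\ref{lem:t_0_is_one}; everything else is elementary flat geometry together with basic quasiconformal distortion. The one delicate point---and what really makes the estimate go through---is the identification of the height of $A_t$ with the fixed weight $w_0$, independent of $t$: this forces the asymptotics of ${\rm Mod}(A_t)$ to be determined entirely by $\ell_t$, which in turn is pinched between two multiples of $K_{\gamma(t)}^{-1}$ by the two displayed bounds.
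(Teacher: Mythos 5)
Your proof is correct and follows essentially the same route as the paper: the upper bound ${\rm Mod}(A_t)\le 1/\ext_{\gamma(t)}(\alpha)\le K_{\gamma(t)}/\ext_{x_0}(\alpha)$ via \eqref{eq:definition_extremal_length_geometric} and quasiconformal distortion, and the lower bound via $1/{\rm Mod}(A_t)=\ell_{J_t}(\alpha)/w_0=w_0^{-2}(\mbox{$J_t$-area of }A_t)\le w_0^{-2}\|J_t\|\asymp K_{\gamma(t)}^{-1}$ using \eqref{eq:extremal_length_G_s}. The identification of the $J_t$-height of $A_t$ with the fixed weight $w_0$, which you single out as the delicate point, is exactly the step the paper uses implicitly in writing $\ell_{J_t}(\alpha)/w_0=w_0^{-2}(\mbox{$J_t$-area of }A_t)$.
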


\begin{proof}
From \eqref{eq:definition_extremal_length_geometric},
$$
{\rm Mod}(A_{t})\le 1/\ext_{\gamma(t)}(\alpha)
\le K_{\gamma(t)}/\ext_{x_0}(\alpha)
$$
for all $t\in T$.
On the other hand,
from Lemma \ref{lem:behavior_of_K},
\begin{align*}
1/{\rm Mod}(A_{t})
&=\ell_{J_{t}}(\alpha)/w_0
=w_0^{-2}(\mbox{$J_{t}$-area of $A_{t}$}) \\
&\le
w_0^{-2}\|J_{t}\| =w_0^{-2}\ext_{\gamma(t)}(G)
\asymp K_{\gamma(t)}^{-1}
\end{align*}
as $t\to \infty$.
\end{proof}


%
%

\subsection{Twisting numbers of paths in flat annuli}
We here define the \emph{twisting numbers} of proper paths
in flat annuli.
Let $\mathbb{S}^1_L$ be the Euclidean circle of length $L$.
Let $A=[0,m]\times \mathbb{S}^1_L$ be a flat annulus.
Let $\beta\subset A$ be an (unoriented) path connecting
components of $\partial A$.
Take a universal cover $[0,m]\times \mathbb{R}\to A$.
Let $\tilde{\beta}$ be a lift of $\beta$.
Let $(0,y_1),(m,y_2)\in [0,m]\times \mathbb{R}$ be the endpoints of $\tilde{\beta}$.
Then,
we define a \emph{twisting number} ${\rm tw}_A(\beta)$ of $\beta$
in $A$ by
$$
{\rm tw}_A(\beta)=|y_1-y_2|/L.
$$
One can easily check that the twisting number is defined independently of the choice of lifts
(cf. Figure \ref{fig:twisting_number}.
See also \cite{Minsky}).
\begin{figure}
\includegraphics[height=3cm]{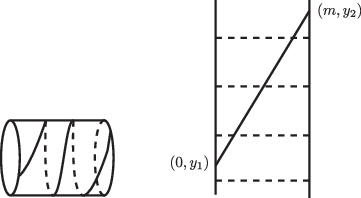}
\caption{Twisting number}
\label{fig:twisting_number}
\end{figure}

\subsection{Twisting numbers of geodesics}
\label{subsec:twisting_numbers}
Let $\beta\in \mathcal{S}$
with $\beta\ne \alpha$.
For $t\in T$,
we let $\beta^*_t$ be a geodesic representative of $\beta$
in $Y_t$ with respect to 
the $J_{t}$-metric.
If $J_t$ admits a flat annulus whose core is homotopic to $\beta$,
we choose one of the closed trajectories in the flat annulus to define $\beta^*_t$.

Let $\beta^*_t\cap A_t=\{\sigma^1_s\}_{s=1}^{n_0}$
be the set of $n_0$-straight line segments of $\beta^*_t$  that lie in $A_t$, counted with multiplicity.
Notice that $n_0=i(\beta,\alpha)$,
which is independent of the parameter $t$.
Let $\{\sigma^2_j\}_j$ be a collection
of maximal straight segments
in $\beta^*_t\setminus \cup_{s=1}^{n_0}\sigma^1_s$,
counting multiplicity.
In this section,
for a measured foliation $F$ and a path $\sigma$
transverse to the underlying foliation of $F$,
we define
$i(\sigma,F)$
to be the infimum of
the integrals of the transversal measure of $F$ over 
all paths homotopic to $\sigma$ relative to endpoints.

\begin{lemma}[Twisting number]
\label{lem:twisting_number}
For $s=1,\cdots,n_0$,
the twisting number of $\sigma^1_s$ in $A_{t}$ satisfies
$$
{\rm tw}_{A_{t}}(\sigma^1_s)=o(K_{\gamma(t)})
$$
as $t\to \infty$.
\end{lemma}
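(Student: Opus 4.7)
The plan is to promote the hypothesised horofunction convergence to the asymptotic identity $\ell_{J_t}(\beta)\to i(\beta,G)$ for the $J_t$-length of the geodesic representative $\beta^*_t$, and then to extract the twisting bound from a purely Euclidean analysis inside the flat annulus $A_t$.

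For the length asymptotic, I would use $|J_t|$ as a test density in the definition of extremal length, which gives $\ext_{\gamma(t)}(\beta)\ge \ell_{J_t}(\beta)^2/\ext_{\gamma(t)}(G)$. Dividing both sides by $K_{\gamma(t)}$ and taking square roots yields
\begin{equation*}
\mathcal{E}_{\gamma(t)}(\beta)
\;\ge\; \frac{\ell_{J_t}(\beta)}{\bigl(K_{\gamma(t)}\,\ext_{\gamma(t)}(G)\bigr)^{1/2}}.
\end{equation*}
By Lemma~\ref{lem:t_0_is_one} the left-hand side tends to $i(\beta,G)$, and by \eqref{eq:extremal_length_G_s} the denominator on the right tends to $1$, so $\limsup_t \ell_{J_t}(\beta)\le i(\beta,G)$. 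The reverse inequality $\ell_{J_t}(\beta)\ge i(\beta,G)$ follows from $|\mathrm{Re}\sqrt{J_t}|\le |J_t|^{1/2}$ applied to $\beta^*_t$. Hence $\ell_{J_t}(\beta)\to i(\beta,G)$.

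Next, I would decompose $\beta^*_t$ using the flat structure of $A_t$. Each segment $\sigma^1_s\subset A_t$ is Euclidean-straight with height $w_0$ (the vertical transverse measure across $A_t$) and horizontal displacement $\tau_s c_t$, where $c_t=\ell_{J_t}(\alpha)$ is the circumference of the core and $\tau_s={\rm tw}_{A_t}(\sigma^1_s)$; hence $\ell_{J_t}(\sigma^1_s)=\sqrt{w_0^2+\tau_s^2 c_t^2}$ while the vertical $J_t$-measure of $\sigma^1_s$ equals exactly $w_0$. The total vertical measure along $\beta^*_t$ is at least $i(\beta,G)=n_0w_0+i(\beta,F)$, and the $A_t$-portion contributes exactly $n_0w_0$, so the $J_t$-length of $\beta^*_t\setminus A_t$ is bounded below by the outside vertical measure, and thus by $i(\beta,F)$. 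Altogether,
\begin{equation*}
\ell_{J_t}(\beta)\;\ge\;\sum_{s=1}^{n_0}\sqrt{w_0^2+\tau_s^2 c_t^2}\;+\;i(\beta,F).
\end{equation*}

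Comparing with $\ell_{J_t}(\beta)\to n_0w_0+i(\beta,F)$, every non-negative summand $\sqrt{w_0^2+\tau_s^2 c_t^2}-w_0$ must tend to zero, forcing $\tau_s c_t\to 0$ for each $s$. By Lemma~\ref{lem:char_annulus_modulus}, $c_t=w_0/{\rm Mod}(A_t)\asymp K_{\gamma(t)}^{-1}$, so $\tau_s=o(1/c_t)=o(K_{\gamma(t)})$. The main technical step is the opening one: I must convert the horofunction convergence into an \emph{equality in the limit} for $\ell_{J_t}(\beta)$ rather than a mere boundedness; once this sharp length asymptotic is secured, the remainder is Euclidean bookkeeping inside $A_t$.
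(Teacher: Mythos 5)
Your argument is correct and follows essentially the same route as the paper: both squeeze the $J_t$-length of $\beta^*_t$ between the lower bound $i(\beta,G)$ and an upper bound that tends to $i(\beta,G)$ via $\mathcal{E}_{\gamma(t)}(\beta)\to i(\beta,G)$ and \eqref{eq:extremal_length_G_s}, then deduce that the horizontal excess $\sqrt{w_0^2+\tau_s^2c_t^2}-w_0$ of each segment in $A_t$ vanishes, and convert to $o(K_{\gamma(t)})$ using the circumference asymptotics from Lemma \ref{lem:char_annulus_modulus}. The only cosmetic difference is that you work with the unnormalized $J_t$ and phrase the horizontal displacement directly as $\tau_s c_t$, where the paper normalizes to $q_t=J_t/\|J_t\|$ and tracks $i(\sigma^1_s,H_{q_t})$; these are the same quantities.
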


\begin{proof}
When $n_0=i(\beta,\alpha)=0$,
the geodesic representative $\beta^*_t$
does not intersect the interior of $A_t$.
Hence,
the conclusion automatically holds.
Therefore,
we may assume that $n_0\ne 0$.

Let $q_t=J_t/\|J_t\|$.
Then,
the vertical foliation $V_{q_t}$ of $q_t$ is equal to
$\|J_t\|^{-1/2}\,G$ for all $t\in T$.
In particular,
the $q_t$-height $w_t$ of the characteristic annulus $A_t$
is equal to $\|J_t\|^{-1/2}\, w_0$.
Let $H_{q_t}$ be the horizontal foliation of $q_t$.
Since each $\sigma^1_s$ is a $q_t$-straight segment,
\begin{equation}
\label{eq:vertical_horizontal_q_t}
i(\sigma^j_s,V_{q_t})=\int_{\sigma^j_s}V_{q_t}
\quad
\mbox{and}
\quad
i(\sigma^j_s,H_{q_t})=\int_{\sigma^j_s}H_{q_t}
\end{equation}
for $j=1,2$.
$i(\sigma^j_s,V_{q_t})$ and $i(\sigma^j_s,H_{q_t})$
are called the \emph{horizontal} and \emph{vertical} length of $\sigma^j_s$,
respectively
(cf. \cite{MM1}).

Before giving the details,
we first summarize the following calculation as follows.
From Lemma \ref{lem:behavior_of_K},
the width of the foliated annulus in $V_{q_t}$ of
$\alpha$ is $\|J_t\|^{-1/2}w_0=K_t^{1/2}w_0+o(K_t^{1/2})$
as $t\to \infty$.
Hence,
from Lemma \ref{lem:char_annulus_modulus},
the circumference $\ell_t$
of the characteristic annulus $A_t$
with respect to $q_t$ is comparable with $K_t^{-1/2}$.
By Pythagoras' theorem,
the length of any component $\sigma^j_s$ is the square root
of $K_t^{1/2}w_0+o(K_t^{1/2})$ and the vertical length
$i(\sigma^j_s,H_{J_t})$ in the characteristic annulus.
From those observations and the assumption $\gamma(t)\to [G]$,
we can deduce that the vertical length $i(\sigma^j_s,H_{J_t})$
of $\sigma^j_s$ in the annulus with respect to $J_t$ tends to zero
as \eqref{eq:horozontal_intersection_beta1} below.
Therefore,
from Lemma \ref{lem:behavior_of_K},
the vertical length of $\sigma^j_s$ in the annulus with respect to $q_t$
is $o(K_t^{1/2})$.
Thus,
the twisting number is 
comparable with $i(\sigma^j_s,H_{J_t})/\ell_t=o(K_t)$
as $t\to \infty$ as desired.

Let us start the calculation.
From the notation \eqref{eq:vertical_horizontal_q_t},
by Pythagoras' theorem,
we have
\begin{equation}
\label{eq:length_beta_star_t}
\ell_{q_t}(\beta^*_t)=\sum_{s=1}^{n_0}
\sqrt{i(\sigma^1_s,H_{q_t})^2+i(\sigma^1_s,V_{q_t})^2}
+\sum_j
\sqrt{i(\sigma^2_j,H_{q_t})^2+i(\sigma^2_j,V_{q_t})^2}.
\end{equation}
Since $\|q_t\|=1$,
we have $\ell_{q_t}(\beta^*_t)\le \ext_{\gamma(t)}(\beta)^{1/2}$
from \eqref{eq:definition_extremal_length_analytic}.
Therefore,
\begin{align*}
\|J_t\|^{-1/2}i(\beta,G)
&
=
i(\beta,V_{q_t})
\le 
\ell_{q_t}(\beta^*_t) \\
&=
\sum_{s=1}^{n_t}
\sqrt{i(\sigma^1_s,H_{q_t})^2+i(\sigma^1_s,V_{q_t})^2}
+\sum_j
\sqrt{i(\sigma^2_j,H_{q_t})^2+i(\sigma^2_j,V_{q_t})^2}
\\
&
=\sum_{s=1}^{n_t}
\sqrt{i(\sigma^1_s,H_{q_t})^2+\|J_t\|^{-1}\,
w_0^2}
+\sum_j
\sqrt{i(\sigma^2_j,H_{q_t})^2+i(\sigma^2_j,V_{q_t})^2}
\\
&\le
\ext_{\gamma(t)}(\beta)^{1/2}.
\end{align*}
Thus,
we obtain
\begin{align}
i(\beta,G)
&\le 
\sum_{s=1}^{n_0}
\sqrt{\|J_t\|i(\sigma^1_s,H_{q_t})^2+w_0^2} \nonumber\\
&\quad\quad+\sum_j
\sqrt{\|J_t\|i(\sigma^2_j,H_{q_t})^2+\|J_t\|i(\sigma^2_j,V_{q_t})^2} \nonumber \\
&=\sum_{s=1}^{n_0}
\sqrt{i(\sigma^1_s,H_{J_t})^2+w_0^2}
+\sum_j
\sqrt{i(\sigma^2_j,H_{J_t})^2+i(\sigma^2_j,V_{J_t})^2} \nonumber \\
&\le \|J_t\|^{1/2}\ext_{\gamma(t)}(\beta)^{1/2}.
\label{eq:length_beta_star_t_2}
\end{align}
From the assumption,
Lemma \ref{lem:t_0_is_one}
and \eqref{eq:extremal_length_G_s},
$$
\|J_t\|^{1/2}\ext_{\gamma(t)}(\beta)^{1/2}=
(1+o(1))\ext_{\gamma(t)}(\beta)^{1/2}/K_{\gamma(t)}^{1/2}
=(1+o(1))\mathcal{E}_{\gamma(t)}(\beta)
$$
tends to $i(\beta,G)$ as $t\to \infty$.
Since
$$
i(\beta,G)=\int_{\beta^*_t}V_{J_t}
=
n_0w_0+\sum_ji(\sigma^2_j,V_{J_t}),
$$
we deduce from \eqref{eq:length_beta_star_t_2} that
the sum
\begin{align}
\sum_{s=1}^{n_0}
&\left(
\sqrt{i(\sigma^1_s,H_{J_t})^2+w_0^2}-w_0
\right)
\label{eq:twist_intersection_number}
\\
&\quad\quad
+\sum_j
\left(
\sqrt{i(\sigma^2_j,H_{J_t})^2+i(\sigma^2_j,V_{J_t})^2}-i(\sigma^2_j,V_{J_t})
\right)
\nonumber
\end{align}
tends to zero as $t\to\infty$.
Since every term in \eqref{eq:twist_intersection_number} is non-negative,
we get
\begin{equation}
\label{eq:horozontal_intersection_beta1}
\lim_{t\to \infty}i(\sigma^1_s,H_{J_t})=0
\end{equation}
for $s=1,\cdots,n_0$.

We now fix $s=1,\cdots,n_0$.
Let $[0,w_t]\times \mathbb{R}\to [0,w_t]\times \mathbb{S}^1_{\ell_t}\cong A_{t}$ be the universal cover,
where $\ell_t$ is the $q_t$-circumference of $A_{t}$.
Let $(0,y_1)$ and $(w_t,y_2)$ be the endpoints of a lift of $\sigma^1_s$.
From the definition,
$$
|y_1-y_2|=i(\sigma^1_s,H_{q_t}).
$$
Since
$$
{\rm Mod}(A_{t})=w_t/\ell_t=\|J_t\|^{-1/2}w_0/\ell_t
=(1+o(1))K_{\gamma(t)}^{1/2}w_0/\ell_t,
$$
from Lemma \ref{lem:char_annulus_modulus},
we obtain
\begin{equation}
\label{eq:behavior_of_ell_t}
\ell_t\asymp K_{\gamma(t)}^{-1/2}
\end{equation}
for $s=1,\cdots,n_t$.
Thus,
it follows from \eqref{eq:horozontal_intersection_beta1} that
\begin{align*}
{\rm tw}_{A_{t}}(\sigma^1_s)
&=|y_1-y_2|/\ell_{t}
\asymp
i(\sigma^1_s,H_{q_t})K_{\gamma(t)}^{1/2} \\
&=\|J_t\|^{-1/2}i(\sigma^1_s,H_{J_t})K_{\gamma(t)}^{1/2}
=(1+o(1))i(\sigma^1_s,H_{J_t})K_{\gamma(t)} \\
&=o(K_{\gamma(t)}),
\end{align*}
which implies what we wanted.
\end{proof}


%

\subsection{Twisting deformations on flat annuli}
\label{subsec:twisting_flat_annuli}
In this section,
we shall recall a canonical quasiconformal mapping of
the twisting deformations along the core curve
on a round annulus (cf. \cite{MardenM}).

Let $A=\{e^{-2\pi m}<|z|<1\}$ be a round annulus of modulus $m$.
For $\tau>0$,
we consider a quasiconformal self-mapping $W_\tau$ of $A$
by
$$
W_\tau(z)=z|z|^{\frac{i\tau}{2\pi m}}.
$$
Notice that $W_\tau$ satisfies
$$
W_\tau\circ \Pi(x+iy)=\Pi\circ L(x+iy)
$$
where $L(x+iy)=x+(\tau/m)y+iy$
and $\Pi:\{x+iy\mid 0\le y\le m\}\to A$ is the universal covering.
Therefore,
the Beltrami differential of $W_\tau$ is equal to
\begin{equation}
\label{eq:bertrami_twist}
\frac{\overline{\partial} W_\tau}{\partial W_\tau}
=\frac{i(\tau/m)}{4\pi+i(\tau/m)}
\frac{z}{\overline{z}}
\frac{d\overline{z}}{dz}.
\end{equation}
We can easily see that 
when a proper path $\sigma$ in $A$ has the twist parameter $\tau$,
we can choose a sign $s\in \{+1,-1\}$ such that
${\rm tw}_A(W_{s\tau}(\sigma))=0$.

\section{Proof of Theorem \ref{thm:main2}}
In this section,
we shall show Theorem \ref{thm:main2}.
Throughout this section,
we assume that $G=\Sigma_{i=1}^kw_i\alpha_i$ is a maximal rational foliation
and $k=3g-3+m\ge 2$.
Just as in the previous sections,
we assume the projective class $[G]$
is the limit of an almost geodesic ray $\gamma:T\to T(X)$,
and we continue to use the same notation.



\subsection{Notation}
Let $A_{i,t}\subset Y_t$ be the characteristic annulus of $q_t=J_t/\|J_t\|$
for $\alpha_i$.
Let $\Sigma_t$ be the critical graph of $q_t$ and
consider the $K_{\gamma(t)}^{-1/2}$-neighborhood
$N_t$ of $\Sigma_t$ in $Y_t$
with respect to the $q_t$-metric.
Let $A^0_{i,t}=A_{i,t}\setminus N_t$
(cf. Figure \ref{fig:foliation}).
\begin{figure}
\includegraphics[height=5.5cm]{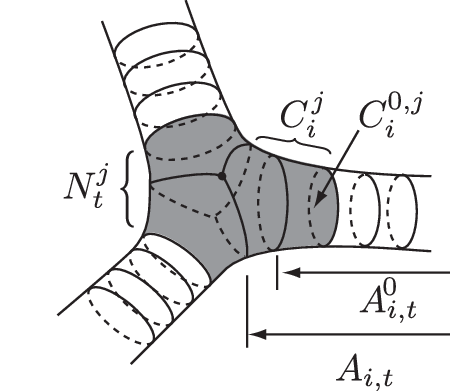}
\caption{The vertical foliation of $q_t$ around a typical component
of $N_t$.
The component $N^j_t$
and annuli $C^j_i$,
$C^{0,j}_i$ in the figure are taken and used in \S\ref{subsubsec:B-j-s}.}
\label{fig:foliation}
\end{figure}
Since the $q_t$-height of $A_{i,t}$ is $(1+o(1))K_{\gamma(t)}^{1/2}w_i$,
when $t\in T$ is sufficiently large,
$A^0_{i,t}$ is a well-defined foliated subannulus of $A_{i,t}$ with height
\begin{equation} \label{eq:width-q_t-A_it}
w''_{i,t}:=(1+o(1))K_{\gamma(t)}^{1/2}w_i-2K_{\gamma(t)}^{-1/2}
\asymp K_{\gamma(t)}^{1/2}.
\end{equation}

Let $\partial A^0_{i,t}=\partial_1A^0_{i,t}\cup \partial_2A^0_{i,t}$.
Since $\partial A^0_{i,t}$ consists of closed leaves in $A_{i,t}$
and the heights of the remaining annuli in $A_{i,t}-A^0_{i,t}$ 
are at most $K_{\gamma(t)}^{-1/2}$,
from \eqref{eq:behavior_of_ell_t},
the moduli of remaining annuli in $A_{i,t}-A^0_{i,t}$ are uniformly bounded,
and hence
$$
{\rm Mod}(A_{i,t})={\rm Mod}(A^0_{i,t})+O(1)
$$
as $t\to \infty$.


\subsection{Calculation of extremal length: Lower bound}
Take $\beta\in \mathcal{S}$.
We devote this section to bound the extremal length of $\beta$ from below.
Henceforth,
we suppose that $i(\beta,G)\ne 0$.

Let $\mathcal{A}_t^\beta$ be the characteristic annulus of
the Jenkins-Strebel differential
$J_{\beta,\gamma(t)}$
for $\beta$.
Fix $i=1,\cdots,k$.
The intersection $\mathcal{A}_t^\beta\cap A^0_{i,t}$
contains at least
$n_i=i(\beta,\alpha_i)$ components $\{D^i_l\}_{l=1}^{n_i}$
such that 
$D^i_l$ contains a path connecting $\mathcal{A}_t^\beta\cap
\partial_1A^0_{i,t}$ and $\mathcal{A}_t^\beta\cap \partial_2A^0_{i,t}$.

Let $\Gamma(D^i_l)$ be the family of rectifiable curves in $D^i_l$
connecting $\mathcal{A}_t^\beta\cap
\partial_1A^0_{i,t}$ and $\mathcal{A}_t^\beta\cap \partial_2A^0_{i,t}$.
Let $\rho^i_{t}$ be the restriction of the $q_t$-metric
to $D^i=\cup_{i=1}^{n_i}D^i_l$.
From \eqref{eq:width-q_t-A_it},
any curve in $\sum_{i=1}^{n_i}\Gamma(D^i_l)$ has $\rho^i_{t}$-length at most $n_iw''_{i,t}$.
Since the critical graph of
the Jenkins-Strebel differential of $\beta$ on $Y_t$ has measure zero,
$$
A(\rho^i_t)=\iint_{D^i}\rho^i_t\le 
(\mbox{$|q_t|$-area of $A^0_{i,t}$})=\ell_{i,t}w''_{i,t}.
$$
By the definition of extremal length,
we have
\begin{align*}
\ext
\left(
\sum_{i=1}^{n_i}\Gamma(D^i_l)
\right)
&\ge 
L_{\rho^i_{t}}\left(
\sum_{i=1}^{n_i}\Gamma(D^i_l)
\right)
^2/A(\rho^i_{t})
\ge (n_iw''_{i,t})^2/\ell_{i,t}w''_{i,t} \\
&=n_i^2{\rm Mod}(A^0_{i,t})=n_i^2{\rm Mod}(A_{i,t})+O(1).
\end{align*}
Since any non-trivial simple closed curve in $\mathcal{A}_t^\beta$
traverses each $D^i_l$ between $\mathcal{A}_t^\beta\cap \partial_1 A^0_{i,t}$
and $\mathcal{A}_t^\beta\cap \partial_2 A^0_{i,t}$,
such simple closed curve contains a curve in $\sum_{i=1}^{n_i}\Gamma(D^i_l)$.
Therefore,
from (3) of Proposition \ref{prop:extremal_length},
we conclude
\begin{equation} \label{eq:extremal_length_lambda_1}
\ext_{\gamma(t)}(\beta)\ge \sum_{i=1}^k
\ext
\left(
\sum_{i=1}^{n_i}\Gamma(D^i_l)
\right)
\ge \sum_{i=1}^{k}n_i^2{\rm Mod}(A_{i,t})+O(1)
\end{equation}
as $t\to \infty$.

\subsection{Calculation of extremal length: Upper bound}
\label{subsec:upper-estimate}
Before discussing the upper bound,
we deform $Y_t$ slightly as follows.
For $i=1,\cdots,k$,
we fix a component $\sigma^1_{s_i}$ of $\beta^*_t\cap A_{i,t}$.
We put the Beltrami differential \eqref{eq:bertrami_twist}
on each flat annulus $A_{i,t}$ with $\tau=\pm {\rm tw}_{A_{i,t}}(\sigma^1_{s_i})$
(we choose the appropriate sign so that the following holds).
We extend the Beltrami differential to $Y_t$ by putting $0$ on the complement.
Then,
we obtain a quasiconformal deformation of $Y_t$ with respect to the Beltrami differential
to get $\gamma'(t)\in T(X)$.
By Lemmas \ref{lem:char_annulus_modulus} and \ref{lem:twisting_number},
\begin{equation}
\label{eq:no-twists}
{\rm tw}_{A_{i,t}}(\sigma^1_{s_i})/{\rm Mod}(A_{i,t})\to 0
\end{equation}
as $t\to\infty$ for all $i$, and hence,
$$
d_T(\gamma(t),\gamma'(t))\to 0
$$
when $t\to\infty$.
One can easily see that $\gamma':T\to T(X)$
is an almost geodesic ray.
Furthermore,
by Proposition 4.9 of \cite{Rieffel},
$\gamma'(t)$ has the same limit as that of $\gamma(t)$
in the Gardiner-Masur compactification.
Thus,
to simplify of the notation,
we may suppose that $\gamma'(t)=\gamma(t)$.

As remarked in \S\ref{subsec:twisting_flat_annuli},
if we choose the sign of $\tau$ suitably,
after this deformation,
the twist parameter of each $\sigma^1_{s_i}$ is zero.
Hence,
any segment in $\beta^*_t\cap A_{i,t}$ has the twisting number at most one
in $A_{i,t}$ for all $i$,
because $\beta$ is a simple closed curve
and any two segments in $\beta^*_t\cap A_{i,t}$
do not intersect transversely in $A_{i,t}$.
By taking a subray,
we may assume that there is a (non-connected) graph $\Sigma_0$ on $X$ such that
the marking $f_t:X\to Y_t$ induces an isomorphism $\Sigma_0$ and $\Sigma_t$
(in homotopy sense).

\subsubsection{The idea for getting an appropriate upper bound}
\label{subsec:idea}
To bound of the extremal length from above,
from \eqref{eq:definition_extremal_length_geometric},
it suffices to construct a suitable annulus $\mathcal{A}_t$ on $Y_t$
whose core is homotopic to $f_t(\beta)$.
The procedure given here is originally due to S. Kerckhoff in \cite{Ker},
when a given almost geodesic ray $\gamma$ is actually a geodesic (See also \S9 of \cite{Mi1}).
We briefly recall the construction in the case where $\gamma$ is a geodesic.
We first cut each characteristic annulus $A_{i,t}$ of $J_{t}$
into $n_i=i(\beta,\alpha_i)$ congruent horizontal rectangles.
The annulus $\mathcal{A}_t$
is made
by composing
appropriately such (slightly modified)
$n_i$ congruent horizontal rectangles
and ties (quadrilaterals) in $N_t$
(cf. \eqref{eq:annulus_model} and \S\ref{subsubsec:B-j-s}).
We can take ties whose extremal lengths are uniform
(cf. Lemma \ref{lem:B_s}).
Then,
by applying Proposition \ref{prop:extremal_length_upper},
we obtain an upper bound of the extremal length of $\mathcal{A}_t$.

A basic reason why we can get an appropriate upper bound in the case above
is that,
along the Teichm\"uller ray associated to the projective class of
$G=\sum_{i=1}^kw_i\alpha_i$,
the characteristic annuli of the Jenkins-Strebel differential for $G$
are deformed with `no-twisting' deformations,
because the Teichm\"uller deformation is accomplished by stretching
in the horizontal and vertical directions.
In the upper bound,
the major part comes from
the extremal length of congruent rectangles
(cf. \eqref{eq:upper_bound_fini}).
The `no-twisting' property implies that
the totality of the extremal lengths of such rectangles 
coincides with the major part of the lower estimate \eqref{eq:extremal_length_lambda_1}
(see \eqref{eq:extremal_R_i_l}).

In the case where $\gamma$ is an almost geodesic ray,
we have already observed
that $\beta$ is not twisted very much in the characteristic annuli
(cf. \eqref{eq:no-twists}).
Hence,
we can apply the similar argument for getting an appropriate upper bound
of $\ext_{y_t}(\beta)$.

\subsubsection{Ties $\{B^j_s\}_s$}
\label{subsubsec:B-j-s}
In accordance with the idea explained above,
we shall construct appropriate ties in pairs of pants $N_t^j$.

Since $G$ is maximal,
any component $N^j_t$ ($j=1,\cdots, 2g-2+m$)
of $N_t$ is one of the three types:
a pair of pants,
an annulus with one distinguished point (a singularity of angle $\pi$ or a flat point),
or
a half-pillow with two cone singularities of angle $\pi$ (cf. Figure \ref{fig:pants}).
We now assume that $N^j_t$ is a pair of pants
because
the case where $N^j_t$ is an annulus or a half-pillow
can be dealt with in the same manner.

Notice from \eqref{eq:behavior_of_ell_t} that
the length of any component of $\partial N^j_t$ is of order
$K_{\gamma(t)}^{-1/2}$ with respect to the metric $\rho^j_t:=|q_t|_{N^j_t}$.
For simplifying of the notation,
we assume that components of $\partial N^j_t$ are $\alpha_{i_1}$,
$\alpha_{i_2}$, and $\alpha_{i_3}$.
Then,
the critical graphs $\Sigma_t\cap N^j_t$ forms one of the graph in Figure \ref{fig:pants}
(cf. \cite{FLP}).
\begin{figure}
\includegraphics[height=4cm]{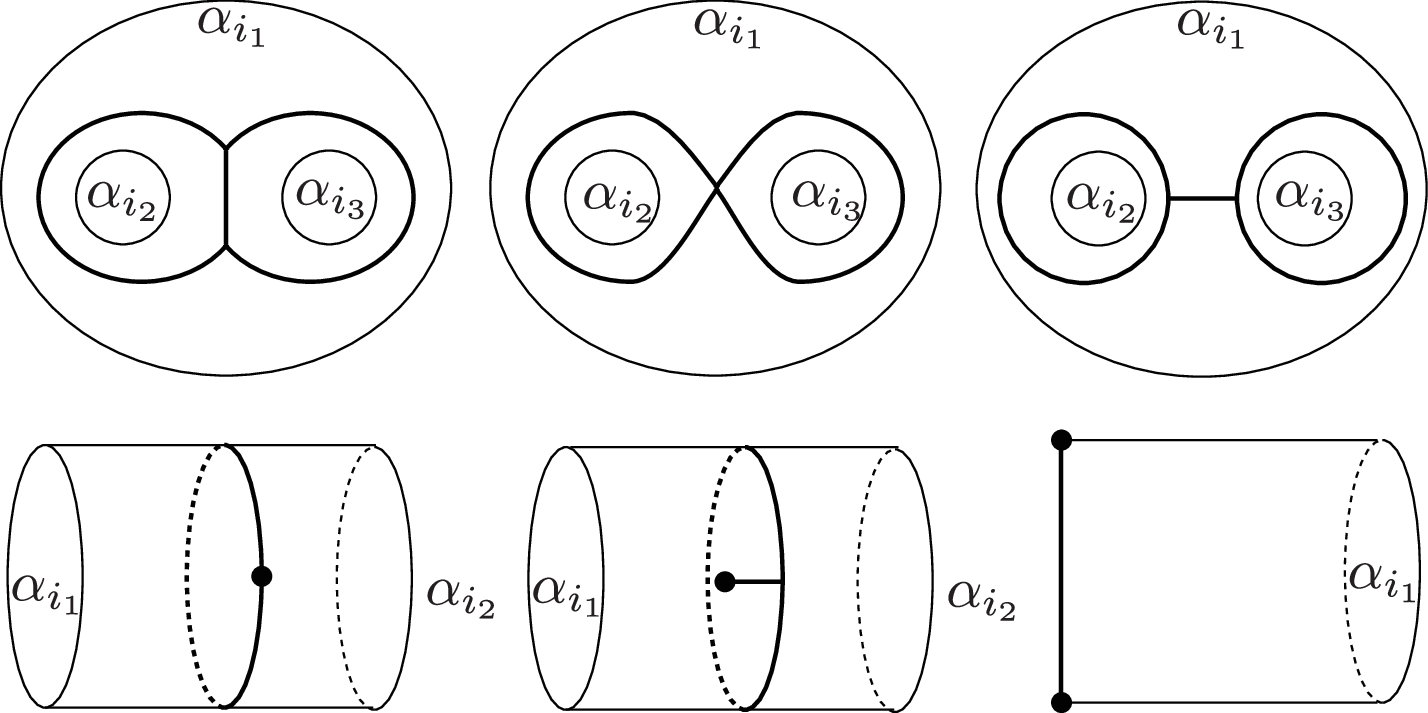}
\caption{Graphs in $N^j_t$.}
\label{fig:pants}
\end{figure}

We make equally spaced $n_{i_l}$ cuts in $\alpha_{i_l}$
where $n_{i_l}=i(\beta,\alpha_{i_l})$ ($l=1,2,3$).
Let $C^j_{i_l}$ be a component of $N^j_t\setminus \Sigma_t$
which contains $\alpha_{i_l}$ in the boundary.
Let $C^{0,j}_{i_l}$ be a subannulus of $C^j_{i_l}$ with height $(2K_{\gamma(t)})^{-1/2}$
and $\alpha_{i_l}\subset C^{0,j}_{i_l}$.
We cut $C^{0,j}_{i_l}$ along the vertical slits with endpoints
in the $n_{i_l}$-cuts in $\alpha_{i_l}$
and get a family of Euclidean rectangles.
Since the circumference and the height of $C^{0,j}_{i_l}$
are of order $(K_{\gamma(t)})^{-1/2}$,
the moduli of such Euclidean rectangles are uniformly bounded above and below.
\begin{figure}
\includegraphics[height=3.5cm]{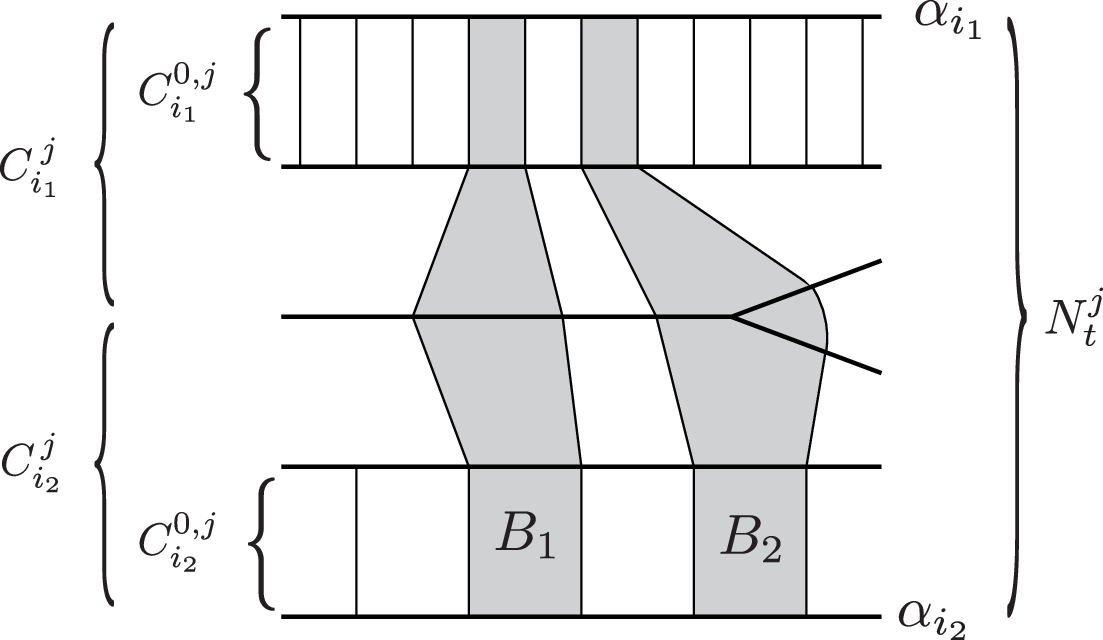}
\caption{Typical examples of ties $B^j_1$ and $B^j_2$ in $N^j_t$.
$B^j_1$ is a `regular' quadrilateral and $B^j_2$ is a `singular' quadrilateral.}
\label{fig:ties}
\end{figure}

The following lemma asserts that
the contribution of the ties is small
for the extremal length of the model annulus
which will be constructed in \eqref{eq:annulus_model} later.

\begin{lemma}[See Figure \ref{fig:ties}]
\label{lem:B_s}
There is a family $\{B^j_s\}_s$ of (singular) quadrilaterals
such that 
\begin{itemize}
\item[(1)]
$B^j_s\cap C^{0,j}_{i_l}$ is a rectangle above for all $s$ and $l$,
\item[(2)]
the arc system given by correcting cores of $B^j_s$'s is homotopic to $\beta\cap N^j_t$,
where the \emph{core} of $B^j_s$ is a path in $B_s$ connecting
facing arcs in
$B^j_s\cap \partial N^j_t$
and
\item[(3)]
the extremal length of the family of paths in $B^j_s$ homotopic to the core
is uniformly bounded above.
\end{itemize}
\end{lemma}

\begin{proof}
Notice from \eqref{eq:behavior_of_ell_t} and
the uniformity of the moduli of $\{C^j_{i_l}\}_{l=1}^3$
that the conformal structure of $N^j_t$ is precompact in
the reduced Teichm\"uller space of three holed spheres
(cf. \cite{Earle}).
Since the intersection numbers $\{n_{i_l}\}_{l=1}^3$ are independent of $t$,
we can take $B^j_s$
such that
the width of each $B^j_s$ with respect to the $q_t$-metric
are comparable with $K_{\gamma(t)}^{-1/2}$.
By definition,
the $|q_t|$-area of each $B^j_s$ is $O(K_{\gamma(t)}^{-1})$.

From the reciprocal relation between the module and the extremal length for quadrilateral
or Rengel's type inequality,
the extremal length $\ext (B^j_s)$ of the family of paths in
$B^j_s$ homotopic to the core
satisfies
\begin{equation}
\label{eq:extremal_length_B_s}
\ext (B^j_s)
\le
\frac{\mbox{$|q_t|$-area}}{(\mbox{$|q_t|$-width})^2}
=O(1)
\end{equation}
for all $s$
(see \S4 in Chapter I of \cite{LV}).
\end{proof}

\subsubsection{Construction of a model $\mathcal{A}_t$ of the extremal annulus}
We divide each $A_{i,t}$ into congruent $n_i=i(\beta,\alpha_i)$ rectangles
$\{R_{i,l}\}_{l=1}^{n_i}$ via proper horizontal segments.
We may assume that for any $l$ and $j$,
there is an $s$ such that
$R_{i,l}\cap C^{0,j}_i$
is congruent to $B^j_s\cap C^{0,j}_i$.
We set
$$
R^0_{i,l}=R_{i,l}\setminus (N_t\setminus \cup_j C^{0,j}_i)
$$
(cf. Figure \ref{fig:annulusR}).
\begin{figure}
\includegraphics[height=4.5cm]{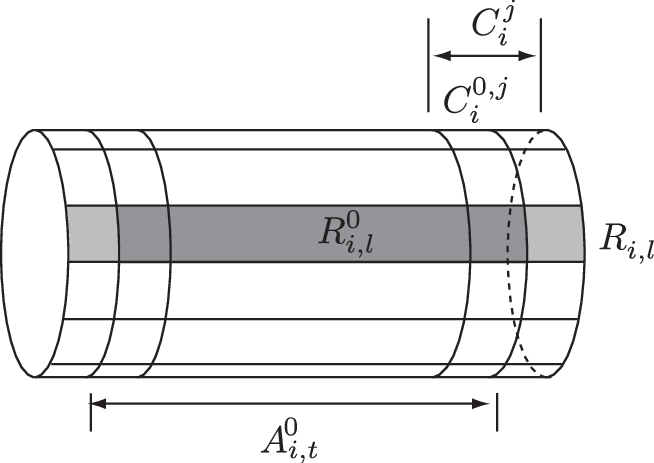}
\caption{Rectangles $R_{i,l}$ and $R^0_{i,l}$ in the annulus $A_{i,t}$.}
\label{fig:annulusR}
\end{figure}
Since twisting numbers of segments in $\beta^*_t\cap A_{i,t}$
on each $A_{i,t}$ are at most one for all $i$,
from \eqref{eq:bertrami_twist} and
the Dehn-Thurston's parametrization of simple closed curves
(cf. \cite{FLP}),
we can glue all $A_{i,t}$ and $N_t$ appropriately at the part $C_i^{0,j}$
to get a Riemann surface $Y'_t$ and an annulus
\begin{equation} \label{eq:annulus_model}
\mathcal{A}'_t=
(\cup_{i,l}R^0_{i,l})
\cup
(\cup_{j,s}B^j_s).
\end{equation}
Since the moduli of the characteristic annuli diverge,
after deforming $Y'_t$ by a quasiconformal mapping with maximal dilatation $1+o(1)$,
we obtain $Y_t$ and the core of the
image $\mathcal{A}_t$ of the annulus $\mathcal{A}'_t$
is homotopic to $f_t(\beta)$.
Thus,
we conclude
\begin{equation}
\label{eq:extremal_length_estimate1}
\ext_{y_t}(\beta)=\ext_{Y_t}(f_t(\beta))\le \ext (\mathcal{A}_t)
=(1+o(1))\ext(\mathcal{A}'_t)
\end{equation}
as $t\to \infty$.
Thus,
to get the upper estimate of the extremal length of $\beta$ on $y_t$,
it suffices to give an upper estimate of the extremal length of $\mathcal{A}'_t$.

\subsubsection{Estimate of extremal length of $\mathcal{A}'_t$}
Let $\rho^{\mathcal{A}}_t$ be the extremal metric on $\mathcal{A}'_t$
for the extremal length $\ext (\mathcal{A}'_t)$
with $A(\rho^{\mathcal{A}}_t)=1$.
Let $\{\mathcal{S}_u\}_{u}$ be a collection of all rectangles of
the form $C^{0,j}_i\cap B_s^j$ for all $i,j,s$.
By the same argument as Claim 1 in \S 9.6 in \cite{Mi1},
we can see the following.

\begin{claim}
\label{claim:upper_bound_1}
For any $u$,
there is a vertical line $\eta'_u$ in $\mathcal{S}_u$
such that
$$
\sum_{u}\ell_{\rho^{\mathcal{A}}_t}(\eta'_u)=O(1)
$$
as $t\to \infty$.
\end{claim}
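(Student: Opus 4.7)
The plan is to mimic the length--area (Fubini/Cauchy--Schwarz) argument that was used for Claim~1 in \S9.6, applied to the extremal metric $\rho^{\mathcal{A}}_t$ restricted to each small rectangle $\mathcal{S}_u = C^{0,j}_i \cap B^j_s$. The key geometric input, which has already been set up in \S\ref{subsubsec:B-j-s} and in the construction of $C^{0,j}_i$, is that each $\mathcal{S}_u$ is a Euclidean rectangle in the $q_t$-metric whose two side lengths are both comparable to $K_{\gamma(t)}^{-1/2}$ (by \eqref{eq:behavior_of_ell_t} and the choice of the $(2K_{\gamma(t)})^{-1/2}$-thick collar $C^{0,j}_i$ together with Claim~\ref{claim:B_s}(1)). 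In particular the Euclidean modulus of each $\mathcal{S}_u$ is bounded above and below by constants independent of $t$, and the total number of indices $u$ is also uniformly bounded (it depends only on the intersection numbers $n_i$ and the topology, not on $t$).

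First, for each $u$ parametrize $\mathcal{S}_u$ as a Euclidean rectangle with horizontal coordinate $x\in[0,a_u]$ (along the $\alpha_i$-direction) and vertical coordinate $y\in[0,b_u]$, and for $x\in[0,a_u]$ let $\eta_{u,x}$ denote the vertical segment $\{x\}\times[0,b_u]$. Writing $\rho^{\mathcal{A}}_t = \rho_t(z)|dz|^2$, Cauchy--Schwarz gives
\begin{equation*}
\ell_{\rho^{\mathcal{A}}_t}(\eta_{u,x})^2
= \Bigl(\int_0^{b_u}\sqrt{\rho_t(x+iy)}\,dy\Bigr)^2
\le b_u \int_0^{b_u}\rho_t(x+iy)\,dy.
\end{equation*}
Integrating in $x$ and applying Cauchy--Schwarz once more yields
\begin{equation*}
\Bigl(\int_0^{a_u}\ell_{\rho^{\mathcal{A}}_t}(\eta_{u,x})\,dx\Bigr)^2
\le a_u\, b_u\, A_u,\qquad A_u:=\int_{\mathcal{S}_u}\rho_t\,dxdy.
\end{equation*}
Since $a_u/b_u$ is uniformly bounded, there is some $x^*_u\in[0,a_u]$ for which the vertical segment $\eta'_u:=\eta_{u,x^*_u}$ satisfies
\begin{equation*}
\ell_{\rho^{\mathcal{A}}_t}(\eta'_u)\le \frac{1}{a_u}\int_0^{a_u}\ell_{\rho^{\mathcal{A}}_t}(\eta_{u,x})\,dx \le C\sqrt{A_u}
\end{equation*}
for some constant $C$ independent of $t$ and $u$.

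Second, sum over $u$. Cauchy--Schwarz applied to the sum and the fact that the $\mathcal{S}_u$ are mutually disjoint subsets of $\mathcal{A}'_t$ give
\begin{equation*}
\sum_u \ell_{\rho^{\mathcal{A}}_t}(\eta'_u)
\le C\sum_u\sqrt{A_u}
\le C\sqrt{N}\Bigl(\sum_u A_u\Bigr)^{1/2}
\le C\sqrt{N}\,A(\rho^{\mathcal{A}}_t)^{1/2}=C\sqrt{N},
\end{equation*}
where $N$ is the total number of rectangles $\mathcal{S}_u$. As noted above, $N$ is bounded independently of $t$, and $A(\rho^{\mathcal{A}}_t)=1$ by normalization. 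This yields the desired $O(1)$ bound.

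The whole argument is essentially a routine length--area estimate; the only substantive point to verify is the geometric claim that $\mathcal{S}_u$ has uniformly bounded Euclidean modulus and that the cardinality $N$ is uniformly bounded in $t$. Both follow directly from \eqref{eq:behavior_of_ell_t}, the explicit choice of the collar $C^{0,j}_i$ of height $(2K_{\gamma(t)})^{-1/2}$, and Claim~\ref{claim:B_s}, so no new estimates are required. The main (mild) obstacle is to formulate the argument uniformly across all the different cases for the components $N^j_t$ (pair of pants, annulus with one marked point, or half-pillow), but in each case the corresponding $\mathcal{S}_u$ are still Euclidean rectangles of side lengths $\asymp K_{\gamma(t)}^{-1/2}$ and the above argument applies verbatim.
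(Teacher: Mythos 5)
Your proof is correct and is precisely the length--area argument that the paper invokes by citing Claim 1 of \S 9.6 of \cite{Mi1}; since the paper gives no further details in the text, your write-up supplies exactly the intended reasoning (uniformly bounded modulus of each $\mathcal{S}_u$, a number of rectangles bounded independently of $t$, two applications of Cauchy--Schwarz, and the normalization $A(\rho^{\mathcal{A}}_t)=1$). The only slip is directional: the cross-cuts $\eta'_u$ must cut the annulus $\mathcal{A}'_t$ transversally to its core, i.e.\ they run parallel to $\alpha_i$, whereas your $\eta_{u,x}=\{x\}\times[0,b_u]$ with $x$ taken along the $\alpha_i$-direction is perpendicular to $\alpha_i$; because the modulus of $\mathcal{S}_u$ is bounded both above and below, the identical averaging applied to the other family of parallel segments yields the same bound $C\sqrt{A_u}$, so this is harmless and requires only relabeling the coordinates.
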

Let us continue the calculation.
Let $\{D_{u}\}_{u}$
be a collection of components of $\mathcal{A}'_t\setminus \cup_u\eta'_u$.
By labeling correctly,
we may assume that
$\partial D_u$ contains $\eta'_u$ and $\eta'_{u+1}$,
where $\eta'_u$ is labeled cyclically in $u$.
By definition,
each $D_{u}$ is contained in either $R_{i,l}$ or $B^j_s$ for some $i,l,j,s$.
Let $\Gamma (D_{u})$ be the family of rectifiable paths connecting vertical segments 
$\eta'_u$ and $\eta'_{u+1}$.
Let $\Gamma(R_{i,l})$ be the family of rectifiable paths in $R_{i,l}$ connecting vertical boundary segments.
Since
\begin{equation}
\label{eq:extremal_R_i_l}
\ext (\Gamma(R_{i,l}))=(\mbox{the height of $A_{i,t}$})/(\ell_{i,t}/n_i)
=n_i{\rm Mod}(A_{i,t}),
\end{equation}
by (1) and (2) of Proposition \ref{prop:extremal_length_upper} and Claim \ref{claim:upper_bound_1},
we have
\begin{align}
\ext(\mathcal{A}'_t)^{1/2}
&=
\left(
\sum_{u}\ext (\Gamma (D_{u}))
\right)^{1/2}+O(1) \nonumber \\
&\le
\left(
\sum_{D_u\subset R_{i,l}}\ext (\Gamma(R_{i,l}))
+
\sum_{D_u\subset B^j_s}\ext (B^j_s)
\right)^{1/2}+O(1) \nonumber \\
&\le
\left(
\sum_{D_u\subset R_{i,l}}
n_i{\rm Mod}(A_{i,t})
+
O(1)
\right)^{1/2}+O(1) \nonumber \\
&=
\left(
\sum_{i=1}^k
n_i^2{\rm Mod}(A_{i,t})
+
O(1)
\right)^{1/2}+O(1).
\label{eq:upper_bound_fini}
\end{align}
Thus we get the desired upper bound of
the extremal length of $\mathcal{A}'_t$,
and hence of $\ext_{y_t}(\beta)$ from
\eqref{eq:extremal_length_estimate1}.

\subsection{Conclusion}
From Lemma \ref{lem:char_annulus_modulus},
by taking a subray if necessary,
we may assume that ${\rm Mod}(A_{i,t})/K_{\gamma(t)}$ tends to
a positive number
$M_i$ for each $i=1,\cdots,k$.
From
\eqref{eq:extremal_length_lambda_1},
\eqref{eq:extremal_length_estimate1},
and \eqref{eq:upper_bound_fini},
we deduce that
\begin{equation}
i(\beta,G)
=
\lim_{t\to \infty}
\left(
\frac{\ext_{\gamma(t)}(\beta)}{K_{\gamma(t)}}
\right)^{1/2}
=
\left(
\sum_{i=1}^k
n_i^2M_i
\right)^{1/2}
=
\left(
\sum_{i=1}^k
M_i i(\beta,\alpha_i)^2
\right)^{1/2}
\label{eq:quadratic_intersection}
\end{equation}
for all $\beta\in \mathcal{S}$.
Since the set $\mathbb{R}_+\otimes \mathcal{S}$
of the weighted simple closed curves
is dense in $\mathcal{MF}$,
and the intersection number function is continuous
and homogeneous
on the product $\mathcal{MF}\times \mathcal{MF}$,
the above equation \eqref{eq:quadratic_intersection}
still holds for all measured foliations $\beta\in \mathcal{MF}$.
Thus,
for any $x,y>0$ and $\beta_1,\beta_2\in \mathcal{S}$ with $i(\beta_1,\beta_2)=0$,
by substituting
$\beta=x\beta_1+y\beta_2$ to \eqref{eq:quadratic_intersection},
we get
\begin{align*}
\left(
\sum_{i=1}^k
M_i n_{1,i}^2
\right)x^2
&+2
\left(
\sum_{i=1}^k M_in_{1,i}n_{2,i}
\right)xy+
\left(
\sum_{i=1}^k
M_i n_{2,i}^2
\right)y^2 \\
&=
(xi(\beta_1,G)+yi(\beta_2,G))^2,
\end{align*}
where $n_{j,i}=i(\beta_j,\alpha_i)$.
This means that
the discriminant of the quadratic form above is zero.
Namely,
we have
$$
\left(
\sum_{i=1}^k M_in_{1,i}n_{2,i}
\right)^2-
\left(
\sum_{i=1}^k
M_i n_{1,i}^2
\right)
\left(
\sum_{i=1}^k
M_i n_{2,i}^2
\right)
=0,
$$
for all such pair of curves $\beta_1,\beta_2\in \mathcal{S}$.
Hence,
two vectors
$$
(\sqrt{M_1}n_{1,1},\cdots,\sqrt{M_k}n_{1,k}),\quad
(\sqrt{M_1}n_{2,1},\cdots,\sqrt{M_k}n_{2,k}),
$$
are parallel for all $\beta_1,\beta_2\in \mathcal{S}$
with $i(\beta_1,\beta_2)=0$.
However,
this is impossible when $k=3g-3+m\ge 2$,
as we already observed
in Section 7 of \cite{Mi1}.

\end{document}